\theoremstyle{definition}
\def\fnum{equation} 
\newtheorem{Thm}[\fnum]{Theorem}
\newtheorem{Cor}[\fnum]{Corollary}
\newtheorem{Lem}[\fnum]{Lemma}
\newtheorem{Def}[\fnum]{Definition}
\newtheorem{Rem}[\fnum]{Remark}
\newtheorem{Pro}[\fnum]{Proposition}
\numberwithin{equation}{section}
\newcommand{\supp}{{\text{supp}}}
 \newcommand{\N}{\ensuremath{\mathbb{N}}}
  \newcommand{\R}{\ensuremath{\mathbb{R}}}
 \newcommand{\ba}{\begin{align*}}
 \newcommand{\ea}{\end{align*}}
 \newcommand{\na}{\nabla}
\newcommand{\la}{\langle}
\newcommand{\ra}{\rangle}
\newcommand{\ep}{\epsilon}
\DeclareMathOperator{\RCD}{RCD}
\DeclareMathOperator{\CD}{CD}
\DeclareMathOperator{\MCP}{MCP}
\DeclareMathOperator{\Ric}{Ric}
\title{Failure of strong unique continuation for harmonic functions on RCD Spaces}
\author{Qin Deng}%
\author{Xinrui Zhao}%
\address{MIT, Dept. of Math.\\
77 Massachusetts Avenue, Cambridge, MA 02139-4307.}
\thanks{}
\email{}
\begin{document}

\maketitle

 \begin{abstract}
 Unique continuation of harmonic functions on $\RCD$ space is a long-standing open problem, with little known even in the setting of Alexandrov spaces. In this paper, we establish the weak unique continuation theorem for harmonic functions on $\RCD(K,2)$ spaces and give a counterexample for strong unique continuation in the setting of $
 \RCD(K,N)$ space for any $N\geq 4$ and any $K\in \R$. 
  \end{abstract}\section{introduction}

Unique continuation of solutions to second order elliptic equations on Euclidean domains is a  well-studied problem in PDE theory. Carleman's pioneering work \cite{Ca} established strong unique continuation in dimension 2 with $C^2$ coefficients. Later, Bers and Nirenberg \cite{BN} showed the same for equations in non-divergence form with measurable coefficients using tools from complex analysis. For equations in divergence form, some results were obtained by \cite{AM, Al, Sc}.
 
When $n\geq 3$, unique continuation was demonstrated by Aronszajn \cite{Ar} for equations with $C^{2,1}$ coefficients and later improved to equations with $C^{0,1}$ Coefficients by Aronszajn, Krzywicki and Szarski \cite{AKS}  using a Carleman-type inequality. Garofalo and Lin \cite{GL1, GL2} used a simpler argument involving frequency estimates to show a similar result. The regularity is sharp by Plis \cite{Pl} and Miller \cite{M}. For more discussions on the frequency function and their applications, see for example \cite{CM1, CM2, Alm, CM3, CM4}.

 In this note we will prove two results -- the failure of strong unique continuation in $\RCD(K,N)$ spaces for $N \geq 2$ and weak unique continuation in the case of $\RCD(K,2)$ spaces.
 
Consider the equation\begin{align}
    Lu = -D_\alpha(ag^{\alpha\beta}D_\beta u+b^\alpha u)+c^\alpha D_\alpha u+eu=0
    \label{equm}
\end{align} with  \begin{align}
    a\in C^{0,1},\,\,a\geq \gamma>0,\,\, b^\alpha,c^\alpha,e\in L^\infty,\,\,g^{\alpha\beta}\,\,\text{is the metric tensor}.
    \label{condm}
    \end{align} Our first result is unique continuation for solutions to such equations in the two-dimensional case.
  \begin{Thm}
 Given function $\phi\in W^{1,2}(\Omega)$ which is a solution of \eqref{equm} on a connected open subset $\Omega$ of a $\RCD(K,2)$ space $(X,d,m)$, if we have that $\phi|_{B_r(x)}=0$ for some $x\in \Omega$ and $r>0$, then $\phi\equiv 0$.
 \label{thm:c01}
 \end{Thm}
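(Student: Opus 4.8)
The plan is to reduce the statement to the classical two-dimensional unique continuation theorem on the smooth part of $X$, and then to propagate the vanishing across the singular set using continuity. The first ingredient is regularity: since an $\RCD(K,2)$ space is locally doubling and supports a $(1,2)$-Poincar\'e inequality, the De Giorgi--Nash--Moser theory applies to \eqref{equm}--\eqref{condm}, so any $W^{1,2}$ solution $\phi$ is locally H\"older continuous. In particular $\phi$ is determined by its restriction to any dense subset of $\Omega$, which is what will let us ignore the singular points at the very end.

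Next I would invoke the structure theory of two-dimensional $\RCD$ spaces. An $\RCD(K,2)$ space $(X,d,m)$ is a topological surface (possibly with boundary) whose regular set $\cR$ — the set of points with tangent cone $\R^2$ — is open, dense, and carries a smooth Riemannian metric $g$ inducing $d$ and $m$ there, while its complement $\cS:=X\setminus\cR$ is a discrete (locally finite) set of conical points. Since $\Omega$ is connected and $\cS$ is a discrete set of points, hence of topological codimension two, the set $\Omega\cap\cR=\Omega\setminus\cS$ is again connected and remains dense in $\Omega$. This discreteness of $\cS$ is exactly what guarantees that removing it cannot disconnect $\Omega$, and it is special to the two-dimensional situation.

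On the smooth surface $\Omega\cap\cR$ the operator $L$ of \eqref{equm} is a genuine second-order uniformly elliptic operator: by \eqref{condm} the principal coefficient $a\,g^{\alpha\beta}$ is uniformly positive and bounded, and the lower order coefficients lie in $L^\infty$. Moreover, being a global $W^{1,2}$ solution of $L\phi=0$ on $X$ restricts to being a weak solution on $\Omega\cap\cR$, because compactly supported Lipschitz functions on $\Omega\cap\cR$ are admissible test functions and $\cS$ is $m$-null. Passing to isothermal coordinates (available in two dimensions via the measurable Riemann mapping theorem, which is precisely the mechanism behind the Bers--Nirenberg method) turns the principal part into a scalar multiple of the flat Laplacian, placing the equation in exactly the class covered by the planar unique continuation theorem of Bers--Nirenberg \cite{BN} — or, equivalently, by the Carleman/frequency-function results of \cite{AKS, GL1}. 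Since $\phi$ vanishes on the nonempty open set $B_r(x)\cap\cR$, weak unique continuation on the connected smooth surface $\Omega\cap\cR$ forces $\phi\equiv 0$ on $\Omega\cap\cR$.

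Finally, because $\Omega\cap\cR$ is dense in $\Omega$ and $\phi$ is continuous, we conclude $\phi\equiv 0$ on all of $\Omega$, the singular points being crossed automatically. I expect the main obstacle to be the structural input and its interface with the PDE: one must know that $\RCD(K,2)$ spaces are smooth surfaces off a \emph{discrete} set carrying a metric regular enough to keep $a\,g^{\alpha\beta}$ within the reach of the two-dimensional theory, and one must verify the equivalence between solving $L\phi=0$ weakly on $X$ and weakly on $\cR$. Possible boundary points of $X$ require only a minor variant (half-disk charts together with a reflection argument), and since they too form a lower-dimensional set they do not obstruct the propagation of vanishing.
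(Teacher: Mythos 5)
Your PDE input is the right one --- in two dimensions the divergence-form unique continuation theorem with merely $L^\infty$ principal coefficients (Bers--Nirenberg, Alessandrini, Schulz) is exactly what the paper uses, and it is essential here precisely because the metric tensor is \emph{not} smooth on the regular set. But your structural description of an $\RCD(K,2)$ space contains two genuine errors that break the argument as written. First, the singular set of a (non-collapsed) $\RCD(K,2)$ space, i.e.\ an Alexandrov surface, is \emph{not} discrete or locally finite: it is at most countable but can be dense (think of a convex surface with a dense set of conical vertices), the regular set is not open, and by Otsu--Shioya the Riemannian structure on the regular part is only measurable, being $C^{1/2}$ off a further $\mathcal{H}^2$-null set --- there is no ``smooth Riemannian metric $g$'' on $\cR$. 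Your propagation step leans on discreteness (``removing a discrete set cannot disconnect $\Omega$'', ``the singular points are crossed automatically''), so it does not survive a dense singular set without a replacement argument. The paper crosses the singular set differently: it shows that any boundary point of the interior of the zero set would have to be singular, and then uses Petrunin's theorem that the interior of a geodesic between two regular points consists entirely of regular points to produce a geodesic that must cross that boundary, a contradiction. (One could alternatively salvage your route by using countability of the singular set to get connectedness of $\Omega\setminus\cS$ together with local H\"older continuity of $\phi$, but that is a different argument from the one you wrote, and the chart radii around regular points degenerate near singular points, so ``weak unique continuation on the connected surface $\Omega\cap\cR$'' still needs the open-and-relatively-closed bookkeeping done carefully.)

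Second, you silently assume every $\RCD(K,2)$ space is a topological surface. This fails for collapsed spaces: when $m$ is not proportional to $\mathcal{H}^2$ the space is a point, a line, a ray, an interval or a circle (the paper devotes a section to reproving this classification). The theorem as stated covers these, so the one-dimensional case must be addressed; the paper does so by showing the density $h$ of $m$ with respect to $\mathcal{H}^1$ is locally Lipschitz and positive away from endpoints, extending the solution to a product $I\times I$ with coefficients independent of the second variable, and applying the planar theorem there. Without this case, and without a correct treatment of a possibly dense singular set in the non-collapsed case, the proposal does not yet constitute a proof.
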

We note that by a result of Lytchak and Stadler \cite{LS}, it suffices to consider Alexandrov space in dimension 2 of curvature at least K and collapsed $\RCD(K,2)$ spaces, which is classified in \cite{KL}.

 It is known by a result of Nikolaev \cite{Ni} (cf. Berestovskij and Nikolaev \cite{BeN}) that Alexandrov spaces with sectional curvature bounded both from above and below have local harmonic coordinates around every point such that the metric tensor is $C^{1,\alpha}$. Combine with results of Aronszajn, Krzywicki and Szarski \cite{AKS} and Garofalo and Lin \cite{GL1}, one obtains  strong unique continuation.  
 
 More precisely, we say that u vanishes to infinite order at $x_0$ if there exists $R>0$ such that for each integer $N>0$ 
 \begin{align}
     \int_{B_r(x_0)}|u|^2dx\leq c_Nr^N,\quad \text{ for }r<R.
     \label{vio}
 \end{align}
 \begin{Thm}
   Given function $\phi\in W^{1,2}(\Omega)$ which is a solution of \eqref{equm} on a connected open subset $\Omega$ of an Alexandrov space $(X,d)$ with curvature bounded both from above and below, if we have that $\phi$ vanishes of infinite order to $x_0$ (see \eqref{vio}) for some $x_0\in \Omega$ , then $\phi\equiv 0$.
  \label{pro:c02}
 \end{Thm}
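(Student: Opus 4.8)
The plan is to reduce the statement to the classical Euclidean strong unique continuation theorem by passing to Nikolaev's harmonic coordinates, in which an Alexandrov space with two-sided curvature bounds is locally a Riemannian manifold with $C^{1,\alpha}$ metric. I would first fix a point $p \in \Omega$ and, by \cite{Ni}, choose a connected harmonic coordinate neighborhood $\Psi \colon U_p \to V_p \subset \R^n$ on which the components of the metric tensor $g^{\alpha\beta}$ are of class $C^{1,\alpha}$ and, after shrinking $U_p$, uniformly elliptic and bounded. Since $\sqrt{\det g}$ is then $C^{1,\alpha}$ and bounded above and below, the reference measure $m$ pushes forward to a measure comparable to Lebesgue measure on $V_p$, and the intrinsic distance $d$ is bi-Lipschitz equivalent to the Euclidean distance on $V_p$.

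In this chart I would rewrite the weak form of $L\phi = 0$. Testing \eqref{equm} against $\psi \in C_c^\infty(V_p)$ and using that the intrinsic Dirichlet integral is computed with the weight $\sqrt{\det g}\,dx$, the equation becomes a Euclidean divergence-form equation
\[
-D_\alpha\!\left(A^{\alpha\beta} D_\beta \phi + \tilde b^\alpha \phi\right) + \tilde c^\alpha D_\alpha \phi + \tilde e\,\phi = 0
\]
on $V_p$, with leading coefficient $A^{\alpha\beta} = a\,g^{\alpha\beta}\sqrt{\det g}$. Because $a \in C^{0,1}$ and $g^{\alpha\beta}\sqrt{\det g} \in C^{1,\alpha} \subset C^{0,1}$, the matrix $A = (A^{\alpha\beta})$ is Lipschitz and uniformly elliptic, while the rearranged lower-order coefficients $\tilde b^\alpha, \tilde c^\alpha, \tilde e$ stay in $L^\infty$. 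Thus the conditions \eqref{condm} are preserved with a genuinely Lipschitz principal part, which is precisely the borderline regularity to which the theorems of \cite{AKS} and \cite{GL1} apply.

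Applying this at $p = x_0$, the bi-Lipschitz comparison of balls and the comparability of $m$ with Lebesgue measure turn the infinite-order vanishing hypothesis \eqref{vio} into its Euclidean counterpart at $\Psi(x_0)$, so the Euclidean strong unique continuation theorem yields $\phi \equiv 0$ on the connected chart $U_{x_0}$. To propagate this to all of $\Omega$, I would connect $x_0$ to an arbitrary point $q \in \Omega$ by a path and cover it by finitely many harmonic charts $U_{p_1}, \dots, U_{p_k}$ with $U_{x_0} \cap U_{p_1} \neq \emptyset$ and $U_{p_i} \cap U_{p_{i+1}} \neq \emptyset$. On each overlap $\phi$ vanishes on a nonempty open set, hence vanishes to infinite order at some point there; applying the Euclidean theorem chart by chart gives $\phi \equiv 0$ on each $U_{p_i}$, and in particular near $q$. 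Since $q$ was arbitrary and $\Omega$ is connected, $\phi \equiv 0$.

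The main obstacle is the second step: one must verify that the passage to harmonic coordinates genuinely places the intrinsic equation in the divergence class \eqref{condm} with a Lipschitz — not merely bounded measurable — principal part, since below Lipschitz regularity strong unique continuation fails by the examples of Plis \cite{Pl} and Miller \cite{M}. The point that makes this work is that Nikolaev's $C^{1,\alpha}$ regularity of $g$ is strictly stronger than Lipschitz, so that the products $a\,g^{\alpha\beta}\sqrt{\det g}$ defining $A$ remain Lipschitz; the accompanying technical check is the bi-Lipschitz equivalence of intrinsic and Euclidean balls, without which the transfer of \eqref{vio} in the final step would not be justified.
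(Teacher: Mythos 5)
Your proposal is correct and follows essentially the same route as the paper: pass to Nikolaev's harmonic coordinates where the metric is $C^{1,\alpha}$, observe that the principal coefficient $a\,g^{\alpha\beta}\sqrt{\det g}$ is Lipschitz and uniformly elliptic while the lower-order terms remain $L^\infty$, and invoke the Euclidean strong unique continuation theorem of \cite{AKS,GL2}. The only cosmetic difference is in the globalization step — you chain charts along a path, whereas the paper shows that the set of points of infinite-order vanishing is both open and closed — and your added remarks on the bi-Lipschitz comparability of balls and of the measure with Lebesgue measure are a welcome explicit justification of the transfer of \eqref{vio} that the paper leaves implicit.
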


We also discuss spaces with two-sided bounded Ricci curvature or with Ricci curvature bounded from below and sectional curvature bounded from above in later sections. Finally we show that if we only assume that $(X,d,m)$ is $\RCD(K,N)$, there are counterexamples for strong unique continuation.

\begin{Thm}
For each $N \geq 4$ and $K$, there exists some $\RCD(K,N)$ space $(X,d,m)$ and some $u$ which is harmonic on some connected $\Omega \subseteq X$ and vanishing to infinite order at some $x_0 \in \Omega$, but that is not identically zero. Moreover, if $K\leq 0$, $u$ may be chosen to be a global harmonic function.
\end{Thm}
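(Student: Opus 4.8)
The plan is to construct the counterexample by hand as a rotationally symmetric metric measure space, reduce the harmonicity equation to a single ordinary differential equation via separation of variables, and then exhibit a solution that decays faster than every polynomial at a distinguished singular point. Concretely, I would look for $X$ of warped product type, $X=(0,\infty)\times Y$ (with a suitable end capped off or sent to a single singular point), where $Y=S^{n-1}$ is a round sphere, $g=dr^2+\phi(r)^2 g_Y$, and the reference measure is $m=e^{-V(r)}\,\mathrm{dvol}_g$. For a fixed spherical mode $\psi$ with $\Delta_Y\psi=-\lambda\psi$, the function $u(r,y)=f(r)\psi(y)$ is $m$-harmonic exactly when
\[
 f''+\Big((n-1)\tfrac{\phi'}{\phi}-V'\Big)f'-\tfrac{\lambda}{\phi^2}f=0 .
\]
The goal is to choose $\phi$ and $V$ so that this ODE becomes (irregular) singular at the distinguished point $r=r_*$ in a way that forces a subdominant solution with WKB asymptotics $f\sim\exp(-c\,|r-r_*|^{-\delta})$ for some $\delta>0$. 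Such an $f$ and all its derivatives vanish at $r_*$, which gives the infinite-order vanishing required by \eqref{vio} while $u\not\equiv 0$.

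The steps would then be: (i) pin down explicit $\phi,V$ realizing the singular ODE; (ii) verify that the resulting $u$ lies in $W^{1,2}(\Omega)$ and is genuinely weakly harmonic \emph{across} $r_*$, i.e. that no spurious flux is created at the singular point — this should follow from showing $\{r_*\}$ has zero $2$-capacity so that it is a removable set for the Dirichlet form; (iii) verify that $(X,d,m)$ is $\RCD(K,N)$ for the desired $N\ge 4$, using the curvature–dimension/Bakry–Émery identities for warped products together with the dimensional slack $N>n=\dim X$; and (iv) when $K\le 0$, arrange the end of $X$ to be complete so that $u$ extends to a global harmonic function, giving the last assertion of the theorem.

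The hard part will be reconciling Step (ii)–(iii) with Step (i): infinite-order vanishing and a two-sided-free lower Ricci bound are in genuine tension. Indeed, in the smooth (weighted) category a lower bound $\Ric_N\ge K$ together with mild regularity yields monotonicity of an Almgren-type frequency and hence strong unique continuation; consistently, a direct computation shows that the Bakry–Émery curvature–dimension quantity for the model above degenerates to $-\infty$ at precisely the regime needed for the WKB decay (the warping $\phi$ collapsing faster than linearly, or $|V'|\to\infty$, each drives the radial $N$-Ricci to $-\infty$). The distinguished point therefore cannot be an interior smooth point, and the whole difficulty is to engineer a \emph{genuine} metric–measure singularity at $r_*$ that is analytically strong enough to produce super-polynomial decay yet still satisfies the synthetic curvature–dimension condition. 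I expect this to be achieved either by realizing $X$ as a measured Gromov–Hausdorff limit of smooth $\RCD(K,N)$ spaces (so that stability of the $\RCD$ condition supplies the curvature bound even though the limit is singular) or by verifying $\CD(K,N)$ directly along Wasserstein geodesics that avoid the thin singular set; the hypothesis $N\ge 4$ should be exactly what provides enough Bakry–Émery slack and enough codimension of the singular set for both the curvature bound and the zero-capacity (removability) statement to hold simultaneously.
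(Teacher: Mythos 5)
Your geometric ansatz is the right one and essentially coincides with the paper's: the example is the weighted metric horn $g=dr^2+(\tfrac12 r^{1+\ep})^2g_{S^{2}}$ on $\R^3\setminus\{0\}$ with $m=r^{(N-3)(1-\eta)}\,d\mathrm{vol}$ and $N=4$. But the proposal has a genuine gap exactly where you flag ``the hard part'': you never resolve the tension between the cusp and the curvature bound, and your own diagnosis of it points the wrong way. You assert that a warping function collapsing faster than linearly, or $|V'|\to\infty$, ``drives the radial $N$-Ricci to $-\infty$.'' For the \emph{weighted} $N$-Ricci this is false for the right choice of density: with $\chi=-(N-n)(1-\eta)\log r$ the term $\Hess\chi-\frac{d\chi\otimes d\chi}{N-n}$ contributes $+(N-n)\eta(1-\eta)r^{-2}$ in the radial direction, which dominates the $-(n-1)\ep(1+\ep)r^{-2}$ coming from $\phi''/\phi$ once $\ep$ is small relative to $\eta(1-\eta)$; so $\Ric_N\geq K$ holds on a punctured neighborhood of the tip. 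This computation is the heart of step (i)/(iii) and cannot be deferred. Moreover, neither of your two suggested routes to the $\RCD$ condition is what actually closes the argument: the paper verifies $\CD_{loc}$ at the vertex directly by showing that geodesics between points near the tip never pass through the tip (a triangle-inequality estimate using $\diam\,\partial B_r=\tfrac{\pi}{2}r^{1+\ep}\ll r$), and it completes the space by an explicit smooth gluing of the warping function to a round cap (for $K>0$) or a cone (for $K\leq 0$), with the $\Ric_4$ bound checked across the gluing region. A zero-capacity/removability argument alone does not give $\CD$ at the vertex, and realizing the space as a limit of smooth $\RCD(K,N)$ spaces is not carried out.

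The analytic mechanism you propose is also genuinely different from the paper's, and weaker in one respect while requiring more work in another. The paper does not separate variables: it shows that \emph{every} harmonic function vanishing at the tip vanishes there to infinite order, by combining the Cheng--Yau gradient estimate with the collapse of the cross-sections. Since $u$ must vanish somewhere on each connected sphere $\partial B_r(0)$, one gets $\sup_{\partial B_r}|u|\leq Cr^{\ep}\sup_{\partial B_{2r}}|u|$, and iterating yields $\sup_{\partial B_r}|u|\leq Ce^{-\frac{\ep}{3}(\log r)^2}$, which beats every power of $r$; nontrivial harmonic functions are then produced by Perron's method (the flux at the tip vanishes precisely because of this decay), and for $K\leq 0$ a global one is built via a three-circle theorem and a blow-down/compactness argument --- ``arrange the end to be complete'' does not by itself produce a global harmonic function. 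Your separated solution $f(r)\psi(y)$ with $f\sim\exp(-\tfrac{2\sqrt{\lambda}}{\ep}r^{-\ep})$ would indeed furnish one explicit example on the same space, so that part of the plan is viable, but you would still owe the weak-harmonicity-across-the-tip verification (which in the paper is the same flux computation) and, above all, the curvature computation and the $\CD$ verification at the vertex that you have left open.
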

\begin{Rem}
 Actually we can prove that there exists a point $x_0$ in an  $\RCD(K,N)$ space $(X,d,m)$ such that every harmonic function which vanishes at $x_0$, vanishes at infinite order.
\end{Rem}
We note that we were only able to construct counterexamples when the point $x_0$ in the above proposition is singular, and it remains an open question whether this is possible at some regular point.
 

 \section*{Acknowledgement}
We are very grateful to Prof.\,\,Tobias Colding for introducing us to the unique continuation problem and several inspirational discussions. We thank Prof.\,\,Vitali Kapovitch for helpful discussions and comments during the writing of this paper. We also thank the anonymous referees for comments and suggestions. Xinrui Zhao is supported by NSF Grant DMS 1812142.
 
 \section{Local Charts in the 2-dimensional case}
 For an $\RCD(K,2)$ space $(X,d,m)$, we have, from Lytchak and Stadler \cite{LS}, that
 \begin{Pro}(\cite{LS})
If $m = \mathcal{H}^2$ (i.e. the space is non-collapsed), then $(X,d)$ is an Alexandrov space with curvature at least $K$.
 \label{pro:raequ}
 \end{Pro}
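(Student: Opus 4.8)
The statement is exactly the non-collapsed half of the Lytchak--Stadler equivalence between two-dimensional Alexandrov spaces of curvature $\geq K$ and non-collapsed $\RCD(K,2)$ spaces, so the plan is to recover their argument, whose goal is to verify the defining triangle-comparison inequality of an Alexandrov space directly on $(X,d)$. Since the Alexandrov lower bound is a \emph{local} condition that globalizes on complete length spaces (Toponogov globalization), and an $\RCD$ space is by definition complete and separable, it suffices to produce, around each $x \in X$, a neighborhood on which every geodesic triangle is at least as ``fat'' as its comparison triangle in the model surface $M_K^2$. I would organize the argument into three parts: (1) pin down the dimension and the reference measure; (2) classify the tangent cones; (3) upgrade the infinitesimal $\RCD$ information to the global comparison inequality.

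For (1), the non-collapsing hypothesis $m = \mathcal{H}^2$ forces the Hausdorff dimension to equal $2$ and makes $X$ a $2$-rectifiable metric measure space with density $1$ almost everywhere, by the non-collapsed structure theory of De Philippis--Gigli; in particular the essential dimension is $2$, so the general $\RCD$ structure results apply with $N_{\mathrm{ess}}=2$. For (2), the first-variation and cone-splitting machinery for $\RCD$ spaces shows that every tangent cone of $X$ is a metric measure cone, and that at $\mathcal{H}^2$-a.e.\ point it is the Euclidean plane $\R^2$. In dimension two a metric measure cone is a flat cone $C(S^1_\ell)$ over a circle of length $\ell$, and the requirement that the tangent be $\RCD(0,2)$ with reference measure $\mathcal{H}^2$ forces $\ell \leq 2\pi$; equivalently the density is $\leq 1$ with equality exactly at the regular points. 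This already exhibits the singular set as a set of isolated conical points with cone angle $\leq 2\pi$, which is precisely the local model of a two-dimensional Alexandrov space of curvature bounded below.

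Part (3) is the crux and the main obstacle: bridging the gap between the \emph{weak, integrated} curvature bound of $\RCD$ (convexity of the entropy along $W_2$-geodesics, equivalently the Bochner inequality $\tfrac12 \Delta |\nabla f|^2 \geq |\Hess f|^2 + \langle \nabla f, \nabla \Delta f\rangle + K|\nabla f|^2$ in weak form) and the \emph{pointwise} triangle comparison that defines an Alexandrov lower bound. The strategy is to exploit the rigidity of dimension two. Concretely, I would pass to the regular part, where $X$ is a Riemannian surface whose Gauss curvature is $\geq K$ in the distributional sense coming from the Bochner inequality, and then identify $X$ globally with a two-dimensional surface of bounded integral curvature in the sense of Alexandrov--Zalgaller--Reshetnyak whose curvature measure $\omega$ decomposes as an absolutely continuous part (Gauss curvature times $\mathcal{H}^2$, bounded below by $K\,\mathcal{H}^2$) plus nonnegative atoms $2\pi - \ell \geq 0$ at the conical points. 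Reshetnyak's theory then identifies such surfaces, precisely when $\omega - K\,\mathcal{H}^2$ is a nonnegative measure, with Alexandrov spaces of curvature $\geq K$, which yields the desired comparison.

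The delicate points, where essentially all of the work lies, are twofold: first, one must control the measure near the singular set so that no negative curvature is hidden there and the curvature measure is genuinely bounded below by $K\,\mathcal{H}^2$ \emph{globally} rather than only on an absolutely continuous piece; second, one must verify that the weak $\RCD$ Bochner inequality truly produces this lower bound on the full curvature measure. This is exactly the step in which two-dimensionality is indispensable: the conical tangent classification and the surface-of-bounded-curvature framework have no faithful analogue in higher dimensions, which is why for $N \geq 3$ the inclusion $\RCD \subseteq \text{Alexandrov}$ fails and the equivalence genuinely breaks down.
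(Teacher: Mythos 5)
First, a structural remark: the paper does not prove this proposition at all --- it is quoted directly from Lytchak--Stadler \cite{LS} and used as a black box throughout (see the sentence immediately following it, which reduces the $\RCD(K,2)$ analysis to Alexandrov surfaces plus the collapsed case). So the only meaningful comparison is between your sketch and the actual argument of \cite{LS}, and by that standard your proposal has a genuine gap exactly where you yourself locate ``essentially all of the work.'' In step (3) you propose to ``pass to the regular part, where $X$ is a Riemannian surface whose Gauss curvature is $\geq K$ in the distributional sense coming from the Bochner inequality.'' But the regular set of an $\RCD(K,2)$ space carries no a priori smooth --- or even $C^1$ --- Riemannian structure: the structure theory (\cite{MN}, \cite{BS}, \cite{DG}) provides only bi-H\"older/bi-Lipschitz charts at almost every point, there is no metric tensor of sufficient regularity to define a distributional Gauss curvature, and the Bochner inequality in an $\RCD$ space holds only in an integrated weak form against test functions in the abstract Sobolev space. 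Producing a well-defined curvature measure with $\omega \geq K\,\mathcal{H}^2$, i.e.\ the bounded-integral-curvature structure you want to feed into Reshetnyak's theorem, is precisely the theorem to be proved, and your sketch supplies no mechanism for it; the proof in \cite{LS} extracts integral curvature control from synthetic comparison estimates (Laplacian comparison for distance functions, Gauss--Bonnet-type bounds for distance circles) rather than from any smooth structure on the regular part.

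Two secondary inaccuracies in step (2) would also need repair in any completed argument. That every tangent cone of a non-collapsed $\RCD(K,2)$ space is a metric cone is itself a theorem (density monotonicity plus the volume-cone-implies-metric-cone rigidity of \cite{DG}), and the cross-section need not be a circle: it can be an interval of length $\leq \pi$, so $X$ may have boundary --- a flat half-plane or a convex planar disk is a perfectly good non-collapsed $\RCD(0,2)$ space, and the Lytchak--Stadler equivalence must and does accommodate this, whereas the closed-surface Reshetnyak framework as you state it does not. Moreover, ``the singular set as a set of isolated conical points'' is false in general: only the angle deficits $2\pi - \ell$ are locally summable, and cone points may accumulate. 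Your parts (1) and (2) are otherwise in the right spirit (essential dimension $2$, density $\leq 1$ with equality at regular points), and the Toponogov globalization remark is fine, but as it stands the proposal reconstructs the scaffolding of \cite{LS} while leaving its load-bearing step unproved and resting on structure ($C^\infty$ regular part, pointwise Bochner) that $\RCD$ theory does not provide.
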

 
 Therefore, it suffices to consider $2$-dimensional Alexandrov spaces of curvature at least $K$ and collapsed $\RCD(K,2)$ spaces. For an Alexandrov space, we have that 
 \begin{Pro}(\cite{OS})
 Let $(X,d)$ be an $n$-dimensional Alexandrov space and denote $S_X$ as the set of singular points. Then there exists a $C^0$-Riemannian structure on $X\setminus S_X\subset X$ satisfying the following :
\begin{itemize}
    \item[(1)]There exists an $X_0\subset  X\setminus S_X$ such that $X\setminus  X_0$ is of $n$-dimensional Hausdorff measure zero and that the Riemannian structure is $C^{\frac{1}{2}}$-continuous on $X_0\subset X$;
    \item[(2)]The metric structure on $X\setminus S_X$ induced from the Riemannian structure coincides with the original metric of $X$.
\end{itemize}
 \label{pro:c1}
 \end{Pro}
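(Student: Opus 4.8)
The plan is to recover the theorem of Otsu and Shioya by the distance-coordinate method. First I would fix terminology: call a point $p\in X$ \emph{regular} if its space of directions $\Sigma_p$ is isometric to the unit round sphere $S^{n-1}$, equivalently if the tangent cone $T_pX$ is isometric to $\R^n$, and let $S_X$ be the set of non-regular points. The starting point is the quantitative notion of an $(n,\delta)$-strainer: a collection of point pairs $\{(a_i,b_i)\}_{i=1}^n$ near $p$ whose comparison angles satisfy $\tilde\angle a_ipb_i\approx\pi$ and $\tilde\angle a_ipa_j\approx\pi/2$ up to error $\delta$. Using Toponogov comparison I would show that at an $(n,\delta)$-strained point the map $f=(d(\cdot,a_1),\dots,d(\cdot,a_n))$ is a $(1+\tau(\delta))$-bi-Lipschitz homeomorphism from a neighborhood onto an open set of $\R^n$, where $\tau(\delta)\to 0$ as $\delta\to 0$; this supplies the coordinate charts. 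Since every regular point is $(n,\delta)$-strained for all $\delta>0$, these charts cover $X\setminus S_X$.

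Next I would define the Riemannian tensor. Distance functions $d(\cdot,a)$ are semiconcave away from $a$, so their gradients $\na d(\cdot,a_i)$ exist off a null set, and I would set $g_{ij}=\la \na d(\cdot,a_i),\na d(\cdot,a_j)\ra$, computing the inner product through first variation of arclength. The bi-Lipschitz estimate above forces $g$ to be uniformly elliptic on each chart and, by continuity of comparison angles in the base points, $C^0$ on the regular set; this yields the $C^0$-Riemannian structure asserted in the statement. To produce the distinguished subset $X_0$, I would invoke Alexandrov's theorem that semiconcave functions are twice differentiable $\HH^n$-a.e., applied simultaneously to the finitely many strainer distance functions; the common second-order points form a set $X_0$ whose complement in $X\setminus S_X$ is $\HH^n$-null, and at such points a Taylor expansion of the $d(\cdot,a_i)$ yields Hölder control of the $g_{ij}$, i.e. $C^{1/2}$-continuity. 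One must also check that $S_X$ is itself $\HH^n$-null (in fact $\dim S_X\le n-1$), which follows from the density of regular points together with the strainer covering.

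For part (2) I would verify that the length metric $d_g$ induced by $g$ on $X\setminus S_X$ agrees with the restriction of $d$. One inequality is immediate from the bi-Lipschitz bound on $f$; for the reverse I would approximate a minimizing geodesic of $(X,d)$ between two nearby regular points by curves lying in the regular set, compute their $g$-length using the a.e. identity relating $g$-speed to $d$-speed along rectifiable curves, and let the approximation error tend to zero via the bi-Lipschitz estimate.

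The main obstacle is the second paragraph: passing from the merely measurable, a.e.-defined tensor to genuine $C^0$ and then $C^{1/2}$ regularity. The delicate point is that twice-differentiability a.e. only gives the Hessian of each $d(\cdot,a_i)$ at $\HH^n$-a.e. point, and one must upgrade this pointwise information to a uniform Hölder modulus for the coefficient matrix $g_{ij}$ on the full-measure set $X_0$. This requires the quantitative rigidity in Toponogov's theorem, controlling how fast the geometry of small balls around a strained point converges to that of $\R^n$, together with careful bookkeeping to ensure the error terms $\tau(\delta)$ assemble into a genuine $C^{1/2}$ estimate rather than only an a.e. statement.
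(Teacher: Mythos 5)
This proposition is not proved in the paper at all: it is quoted verbatim from Otsu and Shioya \cite{OS} and used as a black box, so there is no in-paper argument to measure your proposal against. Judged against the actual proof in \cite{OS}, your outline does follow the right skeleton: strainer-based distance coordinates giving almost-isometric bi-Lipschitz charts covering the regular set, a metric tensor $g_{ij}=\la \na d(\cdot,a_i),\na d(\cdot,a_j)\ra$ defined through first variation, a distinguished full-measure set $X_0$ carrying the H\"older estimate, and recovery of the length metric from $g$ along rectifiable curves.

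There is, however, a genuine gap at exactly the step you flag as ``the main obstacle,'' and the route you propose to close it would not work. First, Alexandrov's twice-differentiability theorem applies to semiconcave functions on $\R^n$; to use it for $d(\cdot,a_i)$ you would need these functions to be semiconcave \emph{in the chart coordinates}, but the charts are only bi-Lipschitz and semiconcavity is not preserved under bi-Lipschitz changes of variable (making this work is a nontrivial part of \cite{OS} and of Perelman's DC calculus, not a citation of a classical theorem). Second, and more fundamentally, pointwise second-order Taylor expansions at $\mathcal{H}^n$-a.e.\ point give no modulus of continuity at all: a function can admit such an expansion at every point of a full-measure set and still fail the relative estimate $|g_{ij}(x)-g_{ij}(y)|=O(d(x,y)^{1/2})$ for $x\in X_0$ and \emph{all} nearby $y$ that the statement requires. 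In \cite{OS} the exponent $1/2$ does not come from Taylor expansion but from a quantitative angle-comparison lemma: for $x\in X_0$ (defined via an almost-isometry condition on the infinitesimal structure at $x$, not via second differentiability) the angles between geodesics from $x$ and from a nearby point $y$ to the strainer points differ by $O(d(x,y)^{1/2})$. Relatedly, your claim that $g$ is $C^0$ on the regular set ``by continuity of comparison angles in the base points'' conflates comparison angles (which are continuous, being built from distances) with the actual angles between geodesics entering the first-variation formula (which are in general only semicontinuous); this is precisely why the conclusion degrades to an a.e.-defined tensor with H\"older control only on $X_0$. The H\"older continuity of angles is the heart of the theorem, and it is the one ingredient absent from your sketch.
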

 
 See \cite[Section 1]{OS} for a definition of $C^r$-Riemannian structure. If an Alexandrov space has sectional curvature not only bounded from below, but also from above, we have the stronger
 \begin{Pro}(\cite{Ni}, cf. \cite{BeN,KKK})
 Let $(X,d)$ be an Alexandrov space with bounded curvature. Then in a neighbourhood of each interior point there exists a harmonic coordinate system so that the components of the metric tensor in any harmonic coordinate system in $X$ are continuous functions of class $W^{2,p}$, where for $p$ we can take any number not less than one. Harmonic coordinate systems specify on $X$ an atlas of
class $C^{3,\alpha}$ for any $0<\alpha <1$.
 \label{pro:c2}
 \end{Pro}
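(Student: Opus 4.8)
The plan is to follow the strategy of DeTurck and Kazdan, which shows that harmonic coordinates realize the optimal regularity of a Riemannian metric, and to adapt it to the weakly regular setting furnished by Proposition \ref{pro:c1}. The starting point: by Proposition \ref{pro:c1} together with the two-sided curvature bound, near an interior point $X$ is a topological manifold carrying a continuous Riemannian metric $g$ (of class $C^{1/2}$ on a set of full measure) whose induced length metric is $d$; moreover the upper and lower sectional curvature bounds force the sectional curvatures, which exist almost everywhere, to lie in a fixed interval $[k,K]$, so that $\Ric \in L^\infty$ on a coordinate ball.

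First I would construct the harmonic coordinates. Fix a small geodesic ball and a background smooth chart $x = (x^1,\dots,x^n)$; for each $i$ solve the Dirichlet problem $\Delta_g u^i = 0$ with boundary value $x^i$. Since $g$ is continuous and uniformly elliptic, each $u^i$ lies in $W^{1,2}$, and De Giorgi--Nash--Moser estimates together with the maximum principle show that on a smaller ball $u = (u^1,\dots,u^n)$ is $C^{1,\alpha}$-close to $x$; hence its Jacobian is nonsingular and $u$ is a genuine coordinate chart.

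Next I would bootstrap the regularity of the metric in these coordinates. The key point is the identity, valid in harmonic coordinates, $\Delta_g g_{ij} = -2\Ric_{ij} + Q(g,\partial g)$, where $Q$ is a universal expression quadratic in $\partial g$ and rational in $g$. Since $\Ric \in L^\infty$ and $\partial g \in L^2$, the right-hand side is controlled, and $L^p$ elliptic regularity for $\Delta_g$ (whose coefficients are the continuous $g^{ij}$) upgrades $g$ from $W^{1,2}$ to $W^{2,p}$; iterating the estimate, feeding back the improved integrability of $\partial g$, yields $g_{ij} \in W^{2,p}$ for every $p < \infty$, hence $g \in C^{1,\alpha}$ for all $\alpha < 1$. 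Finally, Schauder and $W^{k,p}$ estimates applied to the equation $\Delta_g u = 0$, whose coefficients now inherit the $W^{2,p}$ regularity of $g$, show that the components of a transition map between two harmonic charts (being themselves $g$-harmonic functions) enjoy the stated $C^{3,\alpha}$ regularity, producing a $C^{3,\alpha}$ harmonic atlas.

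The main obstacle is that all of this must be carried out when $g$ is only continuous and the curvature only bounded almost everywhere, so the differential identity for $\Ric$ and the elliptic estimates cannot be invoked in the classical pointwise sense. I expect the hardest part to be justifying the harmonic-coordinate Ricci identity and the first step of the bootstrap in this weak framework — in practice by approximating $g$ by smooth metrics with uniformly bounded curvature, running the DeTurck--Kazdan estimates uniformly, and passing to the limit, all while controlling the nondegeneracy of the harmonic chart uniformly along the approximation.
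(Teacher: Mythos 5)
The paper does not prove Proposition \ref{pro:c2} at all: it is imported verbatim from Nikolaev \cite{Ni} (cf.\ \cite{BeN,KKK}), so there is no in-paper argument to compare against. Your sketch does follow the line along which the result is actually established in the literature (Jost--Karcher/DeTurck--Kazdan harmonic-coordinate estimates combined with approximation by smooth metrics), so the overall strategy is not wrong. But as a proof it has two genuine gaps. First, the direct construction of the chart does not work as stated: solving $\Delta_g u^i=0$ for a metric that is merely continuous (and, via Proposition \ref{pro:c1}, only $C^{1/2}$ off a null set) gives, by De Giorgi--Nash--Moser, solutions in $C^{0,\alpha}\cap W^{1,p}$, \emph{not} $C^{1,\alpha}$; without $C^1$ control you cannot conclude that the Jacobian of $(u^1,\dots,u^n)$ is nondegenerate, and this nondegeneracy is precisely one of the delicate points. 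Second, and more seriously, the entire bootstrap rests on the step you defer to the last sentence: producing smooth metrics with \emph{uniformly} two-sided bounded sectional curvature that approximate the Alexandrov metric in a topology strong enough to pass the harmonic-coordinate estimates to the limit. This is Nikolaev's smoothing theorem, and it is the real content of \cite{Ni}; Otsu--Shioya's almost-everywhere $C^{1/2}$ structure and an ``a.e.\ bounded curvature'' statement are far too weak to launch it, since they give no distributional control on $\Ric$ and no continuity of $g$ across the exceptional set. Once that approximation is granted, your bootstrap via $\Delta_g g_{ij}=-2\Ric_{ij}+Q(g,\partial g)$ and the $C^{3,\alpha}$ regularity of transition maps are standard; without it, the argument does not get off the ground.
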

 
 From the Sobolev embedding theorem for Alexandrov space (cf. \cite{LU}), this implies that the metric tensor is $C^{1,\alpha}$ in the corresponding harmonic coordinate systems. 
 
 Note that the major difference between Proposition \ref{pro:c1} and Proposition \ref{pro:c2} is that once we have bounded curvature, a good chart can be found around any point in the space, while if curvature is bounded merely from below, the chart can only be found around regular points and the metric tensor might even be discontinuous on the chart, if we do not remove an n-dimensional Hausdorff measure zero set, see for example \cite{AB}. We will handle this difficulty in the following sections.

Recall that a solution (sub-solution, super-solution) of \eqref{equm} on an Alexandrov space is defined as (cf. \cite{OS, Sh, KMS, H, AGS, GiT}) 
 \begin{Def}
 A function $u: X\to \R$ is a solution (resp. sub-solution, super-solution) of \eqref{equm} if $u\in W^{1,2}_{loc} (X)$ such that for all $\phi\in Lip_0(X)$ and $\phi\geq 0$, we have \begin{align}
     \int_X  -a\na u\cdot \na \phi- ub\cdot\na \phi+\phi c\cdot\na u+e(z)u\phi\,\, dm=0\, (\text{resp. }\leq 0, \geq 0).
     \label{requ}
 \end{align}
 where the inner product is induced by the Riemannian structure around regular points.
 \end{Def}
 
Using local charts, we may consider harmonic functions locally around regular points  as a solution of an elliptic equation in divergence form with measurable coefficients\begin{align}
     \int_X  -g_{ij}u_i\phi_j- ub_i\phi_i+\phi c_i u_i+e(z)u\phi\,\, dm=0\, (\text{resp. }\leq 0, \geq 0).
     \label{requ2}
 \end{align}

\section{unique continuation in Alexandrov space with bounded curvature}
In general, consider on a Euclidean domain the equation \begin{align}
    Lu = -D_\alpha(a^{\alpha\beta}(z)D_\beta u+b^\alpha(z)u)+c^\alpha(z)D_\alpha u+e(z)u=0
    \label{equ}
\end{align} with  \begin{align}
    a^{\alpha\beta}\in C^{0,1},\,\,\lambda\, I\leq (a^{\alpha\beta})\leq \Lambda\, I,\,\, b,c,e\in L^\infty.
    \label{cond}
    \end{align} 
    
    It is known that
    
    \begin{Pro}(\cite{AKS,GL2}, cf. \cite{KT})
 If u is a solution of \eqref{equ} with coefficients satisfying \eqref{cond} and vanish of infinite order at an interior point $x_0$ (see \eqref{vio}), then we know that $u=0$ in the connected component of the domain containing $x_0$.
 \label{gensuc}
 \end{Pro}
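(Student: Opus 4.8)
The plan is to follow the frequency-function approach of Garofalo and Lin \cite{GL1,GL2}, which is well adapted to divergence-form operators with Lipschitz principal part. Translating so that $x_0=0$, I would introduce the conformal weight $\mu(z)=a^{\alpha\beta}(z)z_\alpha z_\beta/|z|^2$, which by the ellipticity in \eqref{cond} satisfies $\lambda\le\mu\le\Lambda$, and define the Dirichlet-type energy and the weighted boundary mass
\begin{align*}
 D(r)=\int_{B_r} a^{\alpha\beta}D_\alpha u\, D_\beta u\, dz, \qquad H(r)=\int_{\partial B_r}\mu\, u^2\, d\sigma,
\end{align*}
together with the frequency $\cN(r)=rD(r)/H(r)$. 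The lower-order terms $b,c,e\in L^\infty$ from \eqref{equ} will be carried along as perturbations throughout.

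The core step is to prove that $\cN$ is \emph{almost monotone}: there is a constant $C$, depending only on $\lambda,\Lambda$, the Lipschitz norm of $a$, and the $L^\infty$ norms of $b,c,e$, so that $e^{Cr}\cN(r)$ is nondecreasing for small $r$. I would obtain this by differentiating $\log H$ and $\log D$ and combining the resulting identities. Testing \eqref{equ} against $u$ on $B_r$ expresses $D(r)$ through the boundary flux and a lower-order remainder, and yields $\tfrac{d}{dr}\log H(r)$ of the schematic form $\tfrac{n-1}{r}+\tfrac{2\cN(r)}{r}+O(1)$; differentiating $D$ and applying a Rellich--Ne\v{c}as identity (testing \eqref{equ} against the radial field $z\cdot\nabla u$) controls $D'(r)$. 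A weighted Cauchy--Schwarz inequality at $\partial B_r$ then gives the differential inequality $\cN'(r)\ge -C\cN(r)-C$, which is exactly the claimed almost-monotonicity.

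From this I would deduce the doubling inequality
\begin{align*}
 \int_{B_{2r}(x_0)} u^2\, dz \;\le\; C\int_{B_r(x_0)} u^2\, dz
\end{align*}
for all sufficiently small $r$, by integrating $\tfrac{d}{dr}\log H(r)=\tfrac{n-1}{r}+\tfrac{2\cN(r)}{r}+O(1)$ over $[r,2r]$ and using the uniform bound on $\cN$ coming from the monotonicity. Doubling forces a lower bound $\int_{B_r(x_0)}u^2\,dz\ge c\,r^{\beta}\int_{B_R(x_0)}u^2\,dz$ with a fixed exponent $\beta$, and comparing this against the infinite-order vanishing hypothesis \eqref{vio}, namely $\int_{B_r(x_0)}u^2\le c_N r^N$ for every $N$, forces $\int_{B_R(x_0)}u^2=0$; hence $u\equiv0$ on a fixed ball about $x_0$. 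The vanishing then propagates to the entire connected component by the standard argument that the set of points near which $u$ vanishes identically is both open (by the local result) and closed (by continuity).

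The step I expect to be the main obstacle is the almost-monotonicity of $\cN$, precisely because the leading coefficients are only Lipschitz and the lower-order coefficients merely bounded. This regularity is sharp, by the counterexamples of Pli\'s \cite{Pl} and Miller \cite{M}, so the error produced when $a^{\alpha\beta}$ is differentiated in the Rellich identity must be absorbed using only $\|\nabla a\|_{L^\infty}$, while the genuinely non-small contributions of $b,c,e$ have to be handled through Caccioppoli-type estimates and a Gr\"onwall argument rather than by any smallness. An alternative, following Koch and Tataru \cite{KT}, replaces the frequency computation with $L^2$ Carleman estimates carrying polynomial weights, which are tailored to exactly this coefficient class; the technical bookkeeping differs, but the essential difficulty of controlling the first- and zeroth-order perturbations at the critical regularity is the same.
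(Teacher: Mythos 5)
The paper does not prove Proposition \ref{gensuc} at all: it is imported as a known result from \cite{AKS} and \cite{GL2} (cf. \cite{KT}), and your sketch is a faithful outline of precisely the Garofalo--Lin frequency-function argument that the cited reference \cite{GL2} carries out (almost-monotonicity of $e^{Cr}\mathcal{N}(r)$, doubling, comparison with infinite-order vanishing, then the open-closed propagation). So you are taking the same route the paper defers to, and no gap needs to be flagged beyond the expected technical care with the $L^\infty$ lower-order terms, which you already identify.
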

    For weaker assumptions on coefficients, see for example Koch and Tataru \cite{KT}. Using the previous proposition, we have
    
  \begin{Thm}
  If $\phi\in W^{1,2}(\Omega)$ is a solution of \eqref{equm} on a connected open subset $\Omega$ of an Alexandrov space $(X,d)$ with curvature bounded both from above and below, and if there exists $x_0\in \mathring{\Omega}$ and $R>0$ such that for each integer $N>0$  we have \eqref{vio}, then $\phi\equiv 0$.
  \label{Thm:c02}
 \end{Thm}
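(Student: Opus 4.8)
The plan is to reduce the statement to the Euclidean result of Proposition \ref{gensuc} by passing to Nikolaev's harmonic coordinates, and then to propagate the vanishing across $\Omega$ by a connectedness argument. First I would invoke Proposition \ref{pro:c2}: since $X$ has two-sided curvature bounds, around the point $x_0$ there is a harmonic coordinate chart $\Psi : U \to V \subseteq \R^n$ with $x_0 \in U$, in which the metric tensor $g_{ij}$ is of class $C^{1,\alpha}$ (by the Sobolev embedding remark following Proposition \ref{pro:c2}). Writing the weak formulation \eqref{requ2} in these coordinates and recording the volume density $w := \sqrt{\det g}$ through $dm = w\, dx$, the equation for $u := \phi\circ\Psi^{-1}$ takes the Euclidean divergence form \eqref{equ} with leading coefficient $a^{\alpha\beta} = a\, w\, g^{\alpha\beta}$ and lower-order coefficients obtained from $b^\alpha, c^\alpha, e$ by multiplication with the bounded factor $w$.

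The crux is verifying hypothesis \eqref{cond}. Because $g^{\alpha\beta}$ and $w$ lie in $C^{1,\alpha}\subset C^{0,1}$ and $a \in C^{0,1}$, their product $a^{\alpha\beta}$ is again $C^{0,1}$; on the relatively compact chart the inverse metric is uniformly positive definite and $w$ is bounded above and below, so together with $a \geq \gamma$ this yields the uniform ellipticity $\lambda I \leq (a^{\alpha\beta}) \leq \Lambda I$; and the lower-order coefficients remain in $L^\infty$. I would also note that since $\Psi$ is bi-Lipschitz and $w$ is bounded above and below, metric balls $B_r(x_0)$ are comparable to Euclidean balls and $dm$ is comparable to $dx$, so the infinite-order vanishing \eqref{vio} at $x_0$ transfers to the Euclidean notion of infinite-order vanishing for $u$ at $\Psi(x_0)$. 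Proposition \ref{gensuc} then gives $u \equiv 0$ on the connected chart, i.e.\ $\phi \equiv 0$ on $U$. This reduction — specifically the Lipschitz regularity of the leading coefficient — is exactly where the two-sided curvature bound (rather than a mere lower bound) is essential, since under Proposition \ref{pro:c1} the metric need not even be continuous and \eqref{cond} would fail.

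Finally I would propagate by connectedness. Set $W := \{x \in \Omega : \phi \equiv 0 \text{ in a neighborhood of } x\}$; this is open and, by the previous step, nonempty. To see it is closed in $\Omega$, take $y \in \overline{W}\cap\Omega$ and choose a connected harmonic chart about $y$, which Proposition \ref{pro:c2} furnishes at every interior point of $\Omega$ without exception. Since $y \in \overline{W}$, the function $\phi$ vanishes on a nonempty open subset of this chart, and hence vanishes to infinite order at any point there; applying Proposition \ref{gensuc} in the chart gives $\phi \equiv 0$ on the whole connected chart, so $y \in W$. Thus $W$ is open and closed in the connected set $\Omega$, whence $W = \Omega$ and $\phi \equiv 0$.

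The main obstacle in the whole argument is the coefficient-regularity bookkeeping of the second paragraph: one must confirm that absorbing $w$ and multiplying by $a$ does not destroy the $C^{0,1}$ bound or uniform ellipticity required by \eqref{cond}. Once that is secured, the local-to-global step is routine, the only subtlety being that good charts must be available at \emph{every} point of $\Omega$ — which is guaranteed precisely by the two-sided curvature bound.
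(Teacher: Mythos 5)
Your proposal is correct and follows essentially the same route as the paper: reduce to the Euclidean divergence-form equation in Nikolaev's harmonic coordinates, check that the weighted leading coefficient $a\sqrt{\det g}\,g^{\alpha\beta}$ remains Lipschitz and uniformly elliptic so that Proposition \ref{gensuc} applies, and then run the open-closed connectedness argument using the availability of harmonic charts at every point. Your added remarks on transferring the infinite-order vanishing condition through the bi-Lipschitz chart are a sensible elaboration of a step the paper leaves implicit, but the substance of the argument is identical.
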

 \begin{proof}
 Let us consider a harmonic coordinate system around $x_0$. From Proposition \ref{pro:c2} we know that the metric tensor $g^{\alpha\beta}$ are $C^{1,\alpha}$. Then we can rewrite the equation \eqref{requ} as
  \begin{align}
      \int_{\R^n}  (a\,g^{\alpha\beta}D_\alpha u  D_\beta \phi+ u\,b^\alpha D_\alpha\phi+\phi\,c^\alpha D_\alpha u+e(z)\,u\,\phi)\, \sqrt{det(g_{ij})}dx=0, \text{ for all } \phi\in Lip_0(U).
  \end{align}
  As we have $a\sqrt{det(g_{ij})}g^{\alpha\beta}\in C^{0,1}$, $\lambda\,I\leq (a\sqrt{det(g_{ij})}g^{\alpha\beta})\leq \Lambda \,I$ and the rest coefficients are all $L^\infty$. So from Proposition \ref{gensuc} we have that $u$ is zero around $x_0$. This shows that the points at which $u$ vanishes of infinite order form an open set $V$.
  
  Now it suffices to show that the set V of points at which u vanishes of infinite order is also a closed set. Given any $x\in \overline{V}\setminus V$, there is a sequence of points $x_i\in V$ with $x_i\to x$. From Proposition \ref{pro:c2} we know that around any point in the space there is a harmonic coordinate chart. So consider a harmonic chart C around x. There exists $N$ such that for $i\geq N$, we have $x_i\in C\cap V$. From Proposition \ref{gensuc} we know that $\phi=0$ on C. This shows that $x$ is also a point at which u vanishes of infinite order. So $\overline{V}=V$ and $V$ is open. As $\Omega$ is connected and V is non-empty, we know that $V=\Omega$, that is, $\phi=0$ on $\Omega$.

 \end{proof}

 \section{Unique continuation in non-collapsed $\RCD(K,N)$ space}
 It is natural to ask if it would be possible to weaken the assumption of bounded sectional curvature. A natural generalization would be to weaken the assumption of sectional curvature bounded from below to the $\RCD(K,N)$ condition. Indeed from Kapovitch and Ketterer \cite{KK} we know that a $\CD(K,N)$ and CAT(k) space $(X,d,\mathcal{H}^N)$ is a space with bounded sectional curvature. More precisely, 
 
\begin{Thm}(\cite{KK}) Let $N\geq 2$ be a natural number and let $(X,d,\mathcal{H}^N)$ be a complete metric measure space which is $CAT(k)$ (has curvature bounded above by $k$ in the sense of Alexandrov) and satisfies $\CD(K,N)$. Then $k(N-1)\geq K$, and $(X,d)$ is an Alexandrov space of curvature
bounded below by $K-k(N-2)$. In particular, X is infinitesimally Hilbertian.
\label{cdcat}
\end{Thm}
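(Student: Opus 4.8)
The plan is to transfer the problem to the infinitesimal scale, where the upper and lower curvature hypotheses interact transparently, and then to globalize. Both conditions are stable and scale predictably: under the pointed rescalings $(X,\lambda d,x,m_\lambda)$, the hypothesis $CAT(k)$ becomes $CAT(k/\lambda^2)$ and $\CD(K,N)$ becomes $\CD(K/\lambda^2,N)$, and both classes are closed under (measured) Gromov--Hausdorff limits. Hence every tangent cone $T_xX$ is simultaneously $CAT(0)$ and $\CD(0,N)$ with reference measure $\mathcal{H}^N$. Writing $T_xX$ as the Euclidean cone over its space of directions $\Sigma_x$, the $CAT(0)$ condition says exactly that $\Sigma_x$ is $CAT(1)$, while the cone criterion for the curvature--dimension condition says that $\CD(0,N)$ on the cone is equivalent to $\CD(N-2,N-1)$ on $\Sigma_x$. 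So the first task is to understand cross-sections that are simultaneously $CAT(1)$ and $\CD(N-2,N-1)$.

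The central structural step is to show that the tangent cones are isometric to $\R^n$ for a fixed integer $n$, which yields at once the non-collapsing and the infinitesimal Hilbertianity in the statement. I would argue by induction on $N$, the cases $N=2,3$ being handled directly. Applying the statement inductively to $\Sigma_x$ (with the parameters $k=1$, $K=N-2$, dimension $N-1$) shows that $\Sigma_x$ is an Alexandrov space of curvature bounded below by $(N-2)-(N-3)=1$. Being simultaneously bounded below by $1$ and, through $CAT(1)$, above by $1$, the cross-section $\Sigma_x$ has two-sided constant curvature $1$; by the rigidity of spaces with coinciding two-sided bounds it is locally isometric to the unit round sphere. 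The Euclidean cone over a round sphere is flat, so $T_xX\cong\R^n$, and the normalization $m=\mathcal{H}^N$ forces $n=N$. Thus $(X,d,m)$ is a non-collapsed $N$-dimensional space with Euclidean tangent cones, and in particular it is infinitesimally Hilbertian.

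With Euclidean tangent cones established, the space is a non-collapsed $N$-dimensional $\RCD$ space that is also $CAT(k)$; by the regularity theory for such spaces it is a Riemannian manifold off a closed singular set of lower dimension, with a metric tensor regular enough that the pointwise curvature tensor exists on the regular part. I would then read both inequalities off the trace identity. For a unit vector $e_1$ completed to an orthonormal frame, $\Ric(e_1)=\sum_{j=2}^{N}\sec(e_1,e_j)$; the $CAT(k)$ bound gives $\sec\le k$ on each plane, so $K\le\Ric(e_1)\le(N-1)k$, which is the first assertion $k(N-1)\ge K$. Here it is essential that $n=N$, so that the synthetic lower Ricci bound reads as the genuine inequality $\Ric\ge K$ and the trace has exactly $N-1$ summands. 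Isolating one plane in the same sum,
\[
\sec(e_1,e_2)=\Ric(e_1)-\sum_{j=3}^{N}\sec(e_1,e_j)\ge K-(N-2)k,
\]
which is the desired pointwise lower sectional bound.

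The last and hardest step is to promote this pointwise lower bound, available only on the regular set, to a genuine Toponogov triangle comparison valid at every point, including the singular set; this globalization, rather than the curvature algebra, is where the real difficulty lies. I would verify the angle (or quadruple) comparison inequality for the constant $K-(N-2)k$ by a blow-up and density argument: the comparison is closed under Gromov--Hausdorff limits, every tangent cone is Euclidean and therefore satisfies the comparison trivially, and the regular set is dense, so the inequality extends across the singular set by continuity of the distance function. It is precisely the non-branching and the Euclidean-tangent conclusions secured in the first two steps that make this extension possible and rule out the tripod-type singularities that would otherwise destroy any lower curvature bound.
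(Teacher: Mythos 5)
This statement is not proved in the paper at all: it is quoted from Kapovitch--Ketterer \cite{KK} and used as a black box, so there is no internal argument to compare against. Measured against the actual proof in \cite{KK}, your sketch correctly isolates the two genuinely hard points (showing tangent cones are Euclidean, hence non-collapsing and infinitesimal Hilbertianity; then globalizing to a Toponogov-type comparison), and the trace heuristic $\sec(e_1,e_2)=\Ric(e_1)-\sum_{j\ge 3}\sec(e_1,e_j)\ge K-(N-2)k$ is indeed the right explanation of the constant. But as a proof it has two real gaps.

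First, the regularity you invoke does not exist. Even once the space is known to be $\RCD$ and $CAT(k)$, the best available regularity for the metric tensor on the regular part is $BV\cap C^0$ in DC coordinates (this is recorded in Section 3 of the present paper, citing \cite{KKK}); that is nowhere near enough to define a pointwise curvature tensor, to read the synthetic $\CD(K,N)$ condition as a genuine inequality $\Ric\ge K$, or to run the smooth Toponogov theorem on the regular set. Hence both inequalities you ``read off the trace identity'' are unsupported, and your final density argument has nothing to extend: flatness of tangent cones plus density of the regular set does not imply a macroscopic quadruple comparison (the comparison was never established for quadruples of regular points in the first place). Second, the inductive transfer from $\CD(0,N)$ on the tangent cone to $\CD(N-2,N-1)$ on the cross-section $\Sigma_x$ is Ketterer's cone theorem, which is known under infinitesimal Hilbertianity --- one of the very conclusions being proved --- so the induction as written is circular; you also need to justify that tangent cones carry the measure $\mathcal{H}^N$ (volume convergence), which is itself a nontrivial step. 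The argument of \cite{KK} avoids pointwise curvature entirely: after establishing non-branching, Euclidean tangent cones and infinitesimal Hilbertianity, the Alexandrov bound is obtained by verifying a synthetic $(1+3)$-point comparison, playing the concavity supplied by $CAT(k)$ against the one-dimensional density estimates supplied by $\CD(K,N)$ along geodesics; that is where the constant $K-k(N-2)$ is rigorously produced.
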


Combining with Theorem \ref{Thm:c02} we have that solution of \eqref{equm} on such a space have strong unique continuation property. Note that the background measure here must be the proportional to the Hausdorff measure and that the situation is much more complicated otherwise.  

Indeed, from Kapovitch, Kell and Ketterer \cite{KKK}, if $(X,d,m)$ is a complete metric measure space which is $CAT(k)$ and satisfies $
\RCD(K,N)$, the current optimal regularity for the metric tensor is in $BV\cap C^0$. Thus the previous argument using Proposition \ref{gensuc} does not work. What is worse, from  Plis \cite{Pl} and Miller \cite{M}, even if the metric tensor is $C^{0,\alpha}$, there are counterexamples to weak unique continuation. 

If we only assume that the space has two sided Ricci curvature bound with a non-collapsed Ricci limit space structure, then let us recall a classical theorem in Cheeger-Colding theory(\cite{CC1,CC2,CC3}) considering the limit space of a non-collapsing sequence of manifolds with two sided bounds on Ricci curvature.
\begin{Thm}(\cite{CC1})
Let $(Y^n,y)$ be the Gromov-Hausdorff limit of a sequence $\{(M_i^n,p_i)\}$ for which there exists $v>0$ such that $Vol(B_1(p_i))\geq v>0$ for all $i$. If moreover we have that 
\begin{align*}
    |\Ric_{M^n_i}|\leq n-1,
\end{align*}
then we know that the regular points of Y form a $C^{1,\alpha}$-Riemannian manifold.
 \label{rL2sided}
\end{Thm}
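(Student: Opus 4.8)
The plan is to combine the structure theory of non-collapsed Ricci limit spaces with Anderson's elliptic regularity estimates in harmonic coordinates. First I would recall that under the uniform lower volume bound $\Vol(B_1(p_i)) \geq v > 0$, the limit $Y^n$ is genuinely $n$-dimensional and, by the Cheeger--Colding structure theory, its regular set $\cR$ (points whose unique tangent cone is isometric to $\R^n$) is dense and of full measure. The goal is to upgrade the topological-manifold structure on $\cR$ to a $C^{1,\alpha}$-Riemannian one, and the two-sided bound $|\Ric_{M_i^n}| \leq n-1$ is exactly what makes this possible.

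The key analytic input is a uniform lower bound on the harmonic radius. Working on the approximating manifolds $M_i^n$, I would show that for every point $q_i$ lying close, in the pointed Gromov--Hausdorff sense, to a regular point of $Y$, the $C^{1,\alpha}$-harmonic radius $r_h(q_i)$ is bounded below by a uniform positive constant depending only on $n$, $v$, and the scale at which the relevant rescaled ball becomes almost Euclidean. This is established by a contradiction-and-blow-up argument: if the harmonic radius degenerated along a sequence, rescaling would produce a limit that is Gromov--Hausdorff arbitrarily close to $\R^n$ yet carries a metric violating the harmonic-radius bound, contradicting the $\epsilon$-regularity statement that any ball sufficiently close to the Euclidean ball (with two-sided Ricci control) admits harmonic coordinates with definite estimates.

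With the harmonic radius controlled I would invoke the elliptic regularity mechanism. In a harmonic coordinate chart the metric components satisfy
\begin{align*}
    \Delta g_{\alpha\beta} = -2\,\Ric_{\alpha\beta} + Q(g,\partial g),
\end{align*}
where $Q$ is a universal expression quadratic in the first derivatives of $g$. Since $|\Ric| \leq n-1$ and $g$ is a priori $C^{0}$-close to the Euclidean metric with bounded $W^{1,p}$ norm, the right-hand side lies in $L^p$ for every $p$; hence $W^{2,p}$-estimates followed by Sobolev embedding yield uniform $C^{1,\alpha}$ bounds on $g_{\alpha\beta}$ in these charts, for any $\alpha < 1$.

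Finally I would pass to the limit. The uniform $C^{1,\alpha}$ harmonic-coordinate bounds along the sequence are stable under the convergence $M_i^n \GHconv Y^n$: the harmonic charts subconverge, and by Arzel\`a--Ascoli the metric tensors converge in $C^{1,\alpha'}$ for each $\alpha' < \alpha$ to a limiting $C^{1,\alpha}$-Riemannian metric on $\cR$, whose induced distance coincides with $d$. I expect the main obstacle to be the harmonic-radius lower bound of the second step: everything downstream is standard elliptic theory plus compactness, but that estimate is where the full strength of the two-sided Ricci bound together with non-collapsing must be used, through the Cheeger--Colding almost-rigidity and almost-splitting theorems, to guarantee sufficient Euclidean approximation at all small scales near a regular point.
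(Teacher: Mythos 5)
The paper states this theorem purely as a citation to Cheeger--Colding \cite{CC1} and supplies no proof of its own, so there is nothing internal to compare against; your sketch reproduces the standard Anderson/Cheeger--Colding argument (uniform lower bound on the $C^{1,\alpha}$ harmonic radius near regular points via $\epsilon$-regularity and volume convergence, elliptic estimates for $\Delta g_{\alpha\beta}=-2\Ric_{\alpha\beta}+Q(g,\partial g)$ in harmonic coordinates, then passage to the limit), which is precisely the proof in the cited source. The outline is correct.
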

 
 For a general higher dimensional manifold, we know that if the metric tensor is $C^{0,1}$, we have unique continuation theorem for harmonic functions. 

 Note that as the metric tensor around regular points in Y is $C^{1,\alpha}$, combining a similar argument in the proof Theorem \ref{rc01} and \cite{CN} we can show that,
 \begin{Thm}\label{Rictb}
If $(Y^n,y)$ is the Gromov-Hausdorff limit of a sequence, $\{(M_i^n,p_i)\}$ and there exists $v>0$ such that $Vol(B_1(p_i))\geq v>0$ and
\begin{align*}
    |\Ric_{M^n_i}|\leq n-1.
\end{align*}
Then we have strong unique continuation theorem for harmonic functions on $Y$ at regular points.
 \label{rlc}
\end{Thm}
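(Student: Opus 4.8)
The plan is to run exactly the open-and-closed argument used in the proof of Theorem \ref{Thm:c02}, but now carried out on the regular set $\cR$ of $Y$ rather than on the whole space. The reason for this localization is that Theorem \ref{rL2sided} only guarantees a $C^{1,\alpha}$-Riemannian structure, and hence harmonic coordinate charts in which the metric tensor is $C^{1,\alpha}$, on $\cR$; on the singular set $\cS = Y \setminus \cR$ no such chart is available. So let $u$ be harmonic on a connected open $\Omega \subseteq Y$ vanishing to infinite order at a regular point $x_0 \in \Omega$, and set $V \subseteq \cR \cap \Omega$ to be the collection of regular points at which $u$ vanishes to infinite order; by hypothesis $x_0 \in V$.

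First I would show that $V$ is open in $\cR \cap \Omega$. Around any $p \in V$, Theorem \ref{rL2sided} provides a harmonic chart in which $g^{\alpha\beta}$ is $C^{1,\alpha}$; rewriting the harmonicity of $u$ in this chart exactly as in \eqref{requ}--\eqref{requ2} produces a divergence-form equation whose leading coefficient $\sqrt{\det g}\,g^{\alpha\beta}$ is $C^{0,1}$ and whose lower-order terms are $L^\infty$, so Proposition \ref{gensuc} forces $u \equiv 0$ in a neighborhood of $p$. Every regular point in that neighborhood then lies in $V$, giving openness. The same local argument shows $V$ is closed in $\cR \cap \Omega$: if $x \in \cR \cap \Omega$ is a limit of points of $V$, a harmonic chart about $x$ contains points of $V$ on which $u$ already vanishes to infinite order, and Proposition \ref{gensuc} again yields $u \equiv 0$ throughout the chart, so $x \in V$.

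The step that is genuinely new relative to Theorem \ref{Thm:c02}, and which I expect to be the main obstacle, is upgrading ``$V$ open and closed in $\cR \cap \Omega$'' to ``$V = \cR \cap \Omega$''. In the bounded-curvature setting this was automatic because harmonic charts exist at \emph{every} point, so the connected $\Omega$ was itself covered by charts; here the charts live only on $\cR$, and the conclusion requires that $\cR \cap \Omega$ be connected. This is precisely where I would invoke \cite{CN}: under the non-collapsing and two-sided Ricci bounds, the singular set $\cS$ has Hausdorff codimension at least four (in particular at least two), so removing it from the connected open set $\Omega$ does not disconnect it. Combined with the openness and closedness of $V$ and the fact that $x_0 \in V$, this forces $V = \cR \cap \Omega$, so $u \equiv 0$ on the regular part of $\Omega$.

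Finally, since $\cS$ has measure zero, $\cR \cap \Omega$ is dense in $\Omega$, and harmonic functions on non-collapsed Ricci limit spaces are locally Lipschitz, hence continuous; vanishing of $u$ on the dense set $\cR \cap \Omega$ therefore propagates to $u \equiv 0$ on all of $\Omega$. The two delicate points to verify are thus the connectedness of $\cR \cap \Omega$ after removing a set of codimension at least two in this possibly singular setting, for which the structure theory behind \cite{CN} and Theorem \ref{rL2sided} is essential, and the routine check that the local divergence-form rewriting and the hypotheses of Proposition \ref{gensuc} are met verbatim on each chart.
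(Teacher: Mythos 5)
Your overall architecture --- open-and-closed on the regular set via the $C^{1,\alpha}$ harmonic charts of Theorem \ref{rL2sided} together with Proposition \ref{gensuc}, followed by a connectedness argument for $\mathcal{R}\cap\Omega$ --- matches what the paper intends (the paper only sketches this, saying to combine the argument of Theorem \ref{rc01} with \cite{CN}), and you have correctly isolated the connectedness of the regular part as the genuinely new step. The gap is in how you justify that step. You argue that since the singular set $\mathcal{S}$ has Hausdorff codimension at least two, removing it from the connected open set $\Omega$ cannot disconnect it. That implication is a fact about open subsets of manifolds, where one perturbs a path inside coordinate charts; it is not valid in a general metric space, because the perturbation has to be performed exactly near the points of $\mathcal{S}$, which is where no chart exists. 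A closed set of arbitrarily high codimension can disconnect a length space (two cones glued at their tips), and indeed the connectedness of the regular set of a non-collapsed Ricci limit was a well-known question that the Cheeger--Colding codimension bounds alone did not settle; it was resolved only by Colding--Naber.

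What \cite{CN} actually supplies, and what the paper's phrase ``a similar argument in the proof of Theorem \ref{rc01}'' points to, is the geodesic mechanism: the H\"older continuity of tangent cones along the interior of a limit geodesic implies that a geodesic joining two regular points has regular interior (weak convexity of $\mathcal{R}$) --- the analogue of Petrunin's theorem \cite{P1} used in the Alexandrov case. The argument then runs exactly as in Theorem \ref{rc01}: if $V\neq\mathcal{R}\cap\Omega$, every point of $\overline{V}\setminus V$ would have to be singular by your open/closed step, yet a geodesic from a point of $V$ to a regular point of $\Omega\setminus V$ must cross $\overline{V}\setminus V$ while staying in $\mathcal{R}$, a contradiction. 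If you replace your codimension count by this consequence of \cite{CN} (or by the connectedness-of-$\mathcal{R}$ statement there, which is its corollary), the rest of your write-up --- the local rewriting in divergence form, the application of Proposition \ref{gensuc}, and the final density-plus-continuity step yielding weak unique continuation on all of $\Omega$ as in the paper's remark --- goes through.
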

 \begin{Rem}
  The Theorem \ref{Rictb} also implies the weak unique continuation on the whole space.
 \end{Rem}

  \section{Classification of collapsed $\RCD(K,2)$ spaces}
  In this section, we show that a collapsed $\RCD(K,2)$ space $(X,d,m)$ (i.e. where $m$ is not proportional to $\mathcal{H}^2$) is necessarily a point, a line, a ray, an interval or a circle. The results of this section were shown in \cite{KL}. Since the proof simplifies somewhat using the recently established non-branching property of $\RCD(K,N)$ spaces in \cite{D}, we include a self-contained proof. For the same result on Ricci limit space, cf. \cite{Ho}.
  
  We now recall some relevant definitions of $\RCD$ theory, beginning with the definition of tangent cones.
  
  \begin{Def}
  Let $(X,d,m)$ be an $\RCD(K,N)$ space and $\bar{x} \in X$. A pointed metric measure space $(Y, d_Y, m_Y, \bar{y})$ is called a tangent cone of $(X,d,m)$ at $\bar{x}$ if there exists a sequence of radii $r_{i} \to 0^+$ so that 
  \begin{equation*}
  (X, r_i^{-1}d, \frac{1}{m(B_{r_i}(\bar{x}))}m, \bar{x}) \to (Y, d_Y, m_Y, \bar{y})
  \end{equation*}
  in the pointed measured Gromov-Hausdorff topology. The collection of all tangent cones at $\bar{x}$ is denoted $Tan(X,d,m,\bar{x})$. 
  \end{Def}
  
  For the definition of pointed measured Gromov-Hausdorff convergence and relevant compactness results giving the existence of tangent cones at any point, see, for instance, \cite[Section 3]{GMS}. Note that in general, tangent cones may depend on the sequence $r_i \to 0^+$ and need not be unique at any given point. We now define regular points as follows.
  
  \begin{Def}
Let $(X,d,m)$ be an $\RCD(K,N)$ space. For any integer $k \geq 1$ and $x \in X$, we say that $x$ is a $k$-dimensional regular point if
\begin{equation*}
   Tan(X, d, m ,x) = \{(\mathbb{R}^k, d_{\mathbb{R}^k}, \frac{1}{\omega_k}\mathscr{H}^k, 0_k)\}.
\end{equation*}
We denote by $R_k$ the set of all $k$-dimensional regular points.
 \end{Def}
 
The following theorem about the structure of regular sets follows from the works of \cite{MN, BS}.
\begin{Thm}{\cite[Theorem 0.1]{BS}}
Let $(X,d,m)$ be an $\RCD(K,N)$ space. There exists a unique integer $n \in [1,N]$, called the essential dimension of $X$, so that 
\begin{equation*}
    m(X \backslash R_n) = 0.
\end{equation*}
\end{Thm}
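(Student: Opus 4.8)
The plan is to split the statement into its two halves. The existence of at least one admissible $n$ is the soft part: by the rectifiability theory underlying \cite{MN}, $m$-almost every point of $X$ is $k$-dimensional regular for some integer $1 \le k \le N$, so up to an $m$-null set $X = \bigcup_{k=1}^{N} R_k$ with the $R_k$ pairwise disjoint and Borel. Since $m(X) > 0$, at least one $R_n$ has positive measure, which furnishes a candidate dimension. The entire content therefore lies in \emph{uniqueness}, i.e.\ in showing that exactly one stratum can carry positive measure, equivalently that the pointwise dimension $x \mapsto \dim(x) := k$ for $x \in R_k$ is $m$-a.e.\ constant. I would reduce everything to this constancy statement at the outset.

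The main tool I would use, following \cite{BS}, is the theory of regular Lagrangian flows. For a Sobolev vector field $b$ on $X$ with $\dv\, b \in L^\infty$ and the appropriate integrability bounds, there exists a unique regular Lagrangian flow $\mathbf{X}\colon [0,T]\times X \to X$ whose time-$t$ maps nearly preserve the reference measure, $(\mathbf{X}_t)_\# m \le C\, m$. Such flows are abundant because the tangent module $L^2(TX)$ is generated by gradients of heat-flow regularizations of coordinate-type functions. The key claim to establish is that the pointwise dimension is a first integral of every such flow, namely $\dim(\mathbf{X}_t(x)) = \dim(x)$ for $m$-a.e.\ $x$ and all $t$.

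To prove this invariance I would use that the local dimension of $R_k$ agrees $m$-a.e.\ on $R_k$ with the local dimension of the tangent module $L^2(TX)$, and that the regular Lagrangian flow enjoys a Lusin--Lipschitz regularity: outside a set of small measure it is bi-Lipschitz on small balls, hence along $m$-a.e.\ trajectory it blows up to an invertible, linear-type identification between the tangent cones at $x$ and at $\mathbf{X}_t(x)$. Since the dimension of a tangent cone is preserved under bi-Lipschitz identifications, the flow cannot transport a positive-measure subset of $R_k$ into some $R_j$ with $j \ne k$. \textbf{I expect this to be the main obstacle:} quantifying the regularity of the flow sharply enough to transfer the dimension between tangent cones is precisely where the delicate maximal-function and commutator estimates of \cite{BS} are needed, and where the non-branching structure of $\RCD(K,N)$ spaces from \cite{D} helps ensure well-posedness of the flow.

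Finally I would close the argument by a connectivity step. Granting the invariance, suppose for contradiction that both $R_k$ and $R_j$ had positive measure with $k \ne j$. The richness of Sobolev vector fields, combined with the essential connectedness of $X$, lets one propagate mass along chains of regular Lagrangian flows so as to move a positive-measure portion of $R_k$ onto a positive-measure portion of $R_j$; this contradicts the invariance of $\dim$ just established. Hence only a single stratum survives, the integer $n$ is unique, and $m(X \setminus R_n) = 0$, as claimed.
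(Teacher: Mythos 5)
The paper does not prove this statement at all --- it is quoted verbatim as Theorem 0.1 of \cite{BS} and used as a black box in the classification of collapsed $\RCD(K,2)$ spaces --- so there is no in-paper argument to compare yours against. Your outline faithfully reproduces the strategy of the cited reference (decomposition $X=\bigcup_k R_k$ up to an $m$-null set via \cite{MN}, reduction to $m$-a.e.\ constancy of the pointwise dimension, its invariance under regular Lagrangian flows via their Lusin--Lipschitz regularity, and a connectivity step), with the one caveat that the non-branching theorem of \cite{D} plays no role in the Bru\`{e}--Semola proof: well-posedness of the flows there comes from the Ambrosio--Trevisan theory, and \cite{D} is invoked in this paper only later, for the one-dimensional classification.
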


By combining the results of \cite{BGHZ} and \cite{DG}, it is known that an $\RCD(K,2)$ space is collapsed iff its essential dimension is $1$ or $0$. The second case is trivial so we will only consider the case that the essential dimension is $1$. We will prove something slightly more general for any $\RCD(K,N)$ space with essential dimension $1$ which will imply the desired result. 
  
  \begin{Lem}\label{R1 interior}
    Let $(X,d,m)$ be an $\RCD(K,N)$ space and suppose $x \in R_1$, then $x$ lies in the interior of some geodesic.
  \end{Lem}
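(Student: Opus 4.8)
The plan is to exploit that $x\in R_1$ forces the \emph{unique} tangent cone at $x$ to be the line $(\R,|\cdot|,\tfrac{1}{\omega_1}\mathscr{H}^1,0)$, and to convert its two opposite directions into an honest geodesic through $x$. The only structural facts I use are that an $\RCD(K,N)$ space is a complete, locally compact length space, hence geodesic, and that it is non-branching by \cite{D}. The useful reduction is that it suffices to produce points $y\neq x\neq z$ with
\[ d(y,z)=d(y,x)+d(x,z), \]
since then concatenating minimizing geodesics from $y$ to $x$ and from $x$ to $z$ yields a minimizing geodesic with $x$ in its interior. Crucially, $y$ and $z$ may be taken arbitrarily close to $x$, so there is no need to find a geodesic of definite length and the only genuine difficulty is an \emph{exact} such alignment.

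First I would extract approximate configurations from the tangent cone. Fixing a blow-up sequence $r_i\to 0^+$ realizing $Tan(X,d,m,x)=\{(\R,|\cdot|,\tfrac{1}{\omega_1}\mathscr{H}^1,0)\}$ and applying the definition of pointed measured Gromov--Hausdorff convergence to the points $\pm1\in\R$, I obtain $a_i,b_i\in X$ with $d(x,a_i)/r_i\to1$, $d(x,b_i)/r_i\to1$ and $d(a_i,b_i)/r_i\to2$. Hence the excess $e_i:=d(x,a_i)+d(x,b_i)-d(a_i,b_i)$ satisfies $e_i=o(r_i)$: the point $x$ is an \emph{almost} midpoint of $a_i$ and $b_i$ at scale $r_i$. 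Taking minimizing geodesics $\gamma_i$ from $a_i$ to $b_i$ and rescaling by $r_i^{-1}$, stability of minimizing geodesics under Gromov--Hausdorff convergence shows that the rescaled $\gamma_i$ converge to the segment $[-1,1]\subset\R$. In particular the genuine midpoints $m_i$ of $\gamma_i$ satisfy $d(x,m_i)=o(r_i)$, so each $m_i$ is an interior point of a genuine geodesic and $x$ is a limit of such interior points.

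The main obstacle is to upgrade this infinitesimal alignment $e_i=o(r_i)$ to an exact alignment $e_i=0$ at some (possibly tiny) scale, equivalently to show that $x$ itself — and not merely the nearby points $m_i$ — is interior to a geodesic. This does not follow formally: a priori every pair of opposite nearby points could be joined by a minimizing geodesic that ``dips around'' $x$ with a small but strictly positive excess, so that $x$ lies on no geodesic at all, while the shrinking lengths of the $\gamma_i$ prevent a direct Arzel\`a--Ascoli limit from producing a geodesic through $x$. In an Alexandrov space this cannot happen, since two geodesics leaving $x$ at angle $\pi$ concatenate by Toponogov comparison; the $\RCD$ setting has no angle comparison, so the lower Ricci bound and non-branching must enter essentially here. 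My plan is to iterate the midpoint construction, passing from the almost-midpoints to genuine midpoints $m_i\to x$ and their geodesics, and to use non-branching (\cite{D}) together with the \emph{exact} line structure of the tangent cone — which leaves no room for a shortcut to separate from $x$ by a definite proportion of $r_i$ — to force the excess to stabilize at $0$ and to guarantee that the resulting limiting geodesic does not split, so that $x$ is genuinely interior rather than an endpoint. Making the passage from ``excess $o(r_i)$'' to ``excess $0$'' rigorous, by controlling the interaction between the chosen configuration and the blow-up scale, is the step I expect to require the most care, and is precisely where the non-branching hypothesis is indispensable.
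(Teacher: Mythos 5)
Your setup is fine and you have correctly isolated the real difficulty: producing points $a_i,b_i$ near $x$ with excess $e_i=o(r_i)$ is immediate from $x\in R_1$, and the whole content of the lemma is upgrading this to an exact alignment $d(a,b)=d(a,x)+d(x,b)$. But you do not actually carry out that upgrade. What you offer is a plan (``iterate the midpoint construction\ldots force the excess to stabilize at $0$'') together with the admission that making it rigorous ``is the step I expect to require the most care.'' That step is the lemma; as written the proposal has a genuine gap exactly there. Moreover, your guess that non-branching is the indispensable ingredient is a misdirection: the paper does not use non-branching in this lemma at all (it enters only later, in the classification theorem).

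The idea you are missing is a two-scale contradiction argument driven by the fact that $x\in R_1$ means \emph{every} blow-up of $(X,d)$ at $x$, along \emph{every} sequence of scales, is $\mathbb{R}$. Assuming $x$ lies on no geodesic $\overline{p_iq_i}$ (with $d(p_i,x)=d(q_i,x)=r_i$ and excess $\epsilon_i r_i$), let $z_i$ realize $d(\overline{p_iq_i},x)$ and split on $\liminf_i d(z_i,x)/r_i$. If it is positive, blow up at scale $r_i$: in the limit $\mathbb{R}$ you get $p_\infty,q_\infty$ with $d(p_\infty,O)+d(O,q_\infty)=d(p_\infty,q_\infty)=2$ yet $O$ at positive distance from the segment $\overline{p_\infty q_\infty}$, which is impossible on a line. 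If it is zero, blow up instead at the smaller scale $s_i=d(z_i,x)$; since $s_i/r_i\to 0$ the geodesics $\overline{p_iq_i}$ rescale to a full line through $z_\infty$, while the basepoint $O$ sits at rescaled distance exactly $1$ from it --- again impossible in $\mathbb{R}$. It is precisely in this second case that your approach stalls: the ``dipping'' geodesics you worry about are ruled out not by non-branching or an angle comparison, but by re-examining the configuration at the scale of the dip, where the uniqueness of the one-dimensional tangent cone leaves no room for a point off the line.
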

   
  \begin{proof}
    Suppose this is not the case, then there exists $r_i, \epsilon_i \to 0$ and $p_i, q_i \in X$ so that all of the following holds: 
    \begin{itemize}
        \item $d(p_i, x)$ = $d(q_i, x) = r_i$
        \item $d(p_i, x)+d(x,q_i) -d(p_i,q_i) = \epsilon_i r_i$
        \item $x$ does not lie on a geodesic $\overline{p_iq_i}$.
    \end{itemize}
    For each $i$, fix $z_i$ on $\overline{p_iq_i}$ so that $d(z_i, x) = d(\overline{p_iq_i}, x)$. There are two cases:
    
    \underline{Case 1:} $\liminf\limits_{i \to \infty} \frac{d(z_i, x)}{r_i} = 0$
    
    Up to choosing a subsequence, we may assume $s_i := d(z_i, x)$ is so that $\lim\limits_{i \to \infty} \frac{s_i}{r_i} = 0$ and $(X,\frac{1}{s_i}d, x)$ converges to $(\mathbb{R}^1, d_{E}, O)$ in the pointed Gromov-Hausdorff sense. Let $z_i \to z_\infty$. There is a line through $z_\infty$ from the limit of $\overline{p_iq_i}$. However, it is clearly that $O$ is not on this line since the distance between $O$ and the line must be $1$. This is a contradiction. 
    
    \underline{Case 2:} $\liminf\limits_{i \to \infty} \frac{d(z_i, x)}{r_i} > 0$
    
    Up to choosing a subsequence, we may assume $(X,\frac{1}{r_i}d, x)$ converges to $(\mathbb{R}^1, d_{E}, O)$ in the pointed Gromov-Hausdorff sense with $p_i \to p_\infty$, $q_i \to q_\infty$ and $\overline{p_iq_i}$ converging to some limit geodesic $\overline{p_\infty q_\infty}$. By our assumptions, we have
    \begin{itemize}
        \item $d(\overline{p_\infty q_\infty}, O)>0$
        \item $d(p_\infty,O)+d(O,q_\infty) = d(p_\infty,q_\infty)=2$
    \end{itemize}
    It is impossible to find three points on $\mathbb{R}$ which satisfy these properties. This gives a contradiction. 
  \end{proof}
  
  We now prove a classification result for $\RCD(K,N)$ spaces with essential dimension $1$.
  
  \begin{Thm}\label{1D classification}
    Let $(X,d,m)$ be an $\RCD(K,N)$ space with essential dimension $1$. $X$ is a line, ray, interval or circle. Moreover, $m = h\mathcal{H}^1 \ll \mathcal{H}^1$ where $h$ is a $\MCP(K,N)$ density.
  \end{Thm}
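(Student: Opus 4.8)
The plan is to first establish that $X$ is a complete, connected, $1$-dimensional topological manifold with boundary, and then to read off the four possible isometry types together with the structure of $m$. The starting point is that, since the essential dimension of $X$ is $1$, the structure theorem of \cite{BS} gives $m(X\setminus R_1)=0$; in particular $R_1$ is dense in $X$. Combining this with Lemma \ref{R1 interior}, every point of a dense subset lies in the interior of a geodesic, and $X$ is a complete geodesic space because it is $\RCD(K,N)$.

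First I would prove the local model: every $x\in R_1$ has a neighborhood isometric to an open interval of $\R$. Fix by Lemma \ref{R1 interior} a geodesic $\gamma$ with $\gamma(0)=x$ running in both directions from $x$. Since $Tan(X,d,m,x)$ consists of a single copy of $\R$, the space of directions at $x$ has exactly two points, both already realized by $\gamma$. Hence any point $y$ close to $x$ is joined to $x$ by a geodesic whose initial direction agrees with one of the two directions of $\gamma$; by the non-branching property of $\RCD(K,N)$ spaces established in \cite{D}, this geodesic must coincide with a sub-arc of $\gamma$, so $y$ lies on $\gamma$. Thus a small ball around $x$ is exactly the image of $\gamma$ and is isometric to an interval. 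In particular $R_1$ is open, and the interior points of these intervals are themselves regular.

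Next I would classify the remaining points. If some $p\in X$ were a junction at which three or more geodesic segments meet, one would produce two geodesics sharing an initial segment and then separating, contradicting non-branching; hence every point has at most two segments emanating from it. Consequently each point of $X$ is either an interior point (locally an interval) or an endpoint (locally a half-open interval, i.e. a boundary point), so $X$ is a connected $1$-manifold with boundary. Being complete, it cannot possess an incomplete half-open end, and the standard classification of complete connected $1$-manifolds leaves exactly the four possibilities: a line, a ray, a closed interval, or a circle.

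Finally, for the measure, by the rectifiability theory underlying \cite{BS} the restriction of $m$ to $R_1$ is absolutely continuous with respect to $\HH^1$, and since $m(X\setminus R_1)=0$ we conclude $m=h\,\HH^1\ll\HH^1$ globally. Parametrizing $X$ by arc length and viewing $X$ itself as a single geodesic, the measure contraction property $\MCP(K,N)$ — which follows from $\RCD(K,N)$ — translates directly into the statement that $h$ is an $\MCP(K,N)$ density. I expect the main obstacle to be the local model step: one must use in an essential way both that the tangent cone at a regular point is exactly $\R$ (so that there are precisely two directions) and the non-branching property, in order to upgrade ``$x$ lies on a geodesic'' to ``a whole neighborhood of $x$ is that geodesic.'' The classification and the density statement are then routine given $\MCP(K,N)$.
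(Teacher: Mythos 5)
Your overall strategy is the same as the paper's (fix a regular point, use Lemma \ref{R1 interior} to put it in the interior of a geodesic, then use non-branching plus a blow-up at that point to show everything else lies on extensions of that geodesic), but the central step of your argument is asserted rather than proved, and as stated it is not a valid application of non-branching. You write that any $y$ near $x$ is joined to $x$ by a geodesic ``whose initial direction agrees with one of the two directions of $\gamma$,'' and that non-branching then forces this geodesic to coincide with a sub-arc of $\gamma$. Two problems: first, in an $\RCD$ space there is no a priori space of directions, so ``initial direction'' can only mean information extracted from the blow-up, namely that if $y_i\in\overline{yx}$ and $p_i\in\overline{px}$ are at distance $r_i$ from $x$ then (after passing to a case) $d(y_i,p_i)=o(r_i)$. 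Second, and more importantly, non-branching applies to geodesics that share an actual common initial segment; it says nothing about geodesics that are merely asymptotically tangent in the sense $d(y_i,p_i)=o(r_i)$. Upgrading ``$o(r_i)$-close'' to ``coincide'' is exactly the nontrivial content here, and your proposal skips it (you even flag it as the main obstacle without resolving it). The paper closes this gap by a further contradiction argument: assuming $\overline{yx}$ does not extend $\overline{px}$, non-branching forces any geodesic $\overline{y_iq_i}$ to meet $\overline{p_iq_i}$ only at $q_i$, so $x$ is not interior to $\overline{y_iq_i}$ even though $d(y_i,x)+d(x,q_i)-d(y_i,q_i)=o(r_i)$; rerunning the blow-up dichotomy from the proof of Lemma \ref{R1 interior} then yields a contradiction. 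Some version of this quantitative step is unavoidable.

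A secondary soft spot is your classification of points outside $R_1$: ``a junction where three or more segments meet contradicts non-branching'' presumes that the concatenations through the junction are themselves geodesics, which requires justification in a general metric measure space, and ``at most two emanating segments'' does not by itself make a point locally an interval or half-interval. The paper sidesteps this by proving the stronger statement that \emph{every} geodesic from \emph{any} $y\in X$ to the fixed regular point $x$ extends $\overline{px}$ or $\overline{qx}$, so that $X$ is swept out by the maximal extension of a single geodesic; you would do better to aim for that formulation. Your treatment of the measure ($m=h\mathcal{H}^1$ with $h$ an $\MCP(K,N)$ density via one-dimensional localization) matches the paper's citation of \cite{CaM2} and is fine.
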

  
  \begin{proof}
    Fix any $x \in R_1$. By the previous lemma, we may assume that $x$ lies in the interior of some geodesic $\overline{pq}$. We claim that for any $y \in X$, any geodesic $\overline{yx}$ extends either $\overline{px}$ or $\overline{qx}$. Combining this with the non-branching property of $\RCD(K,N)$ spaces is enough to give the desired classification result of the theorem. 
    
    Fix any geodesic $\overline{yx}$. If $\overline{yx} \cap \overline{px} \neq \{x\}$, then $\overline{yx}$ necessarily extends $\overline{px}$ by the non-branching property of $X$ and we are done. The same is true if we replace $\overline{px}$ with $\overline{qx}$ in the previous sentence. Therefore, we may assume $\overline{yx} \cap \overline{px} = \{x\} = \overline{yx} \cap \overline{qx}$.
    
    Fix $r_i \to 0$ so that $(X,\frac{1}{r_i}d,x)$ converges to $(\mathbb{R}, d_{E}, O)$ in the pointed Gromov-Hausdorff sense. For each $i$ choose $y_i$ on $\overline{yx}$ so that $d(y_i,x)=r_i$. Similarly, choose $p_i$ and $q_i$ on $\overline{px}$ and $\overline{qx}$ respective so that $d(p_i,x)=d(q_i,x)=r_i$. Since the blow-up converges to $\mathbb{R}$, we have either
    \begin{equation*}
        \lim\limits_{i \to \infty} \frac{d(y_i,p_i)}{r_i}=0 \text{ and } \lim\limits_{i \to \infty} \frac{d(y_i,q_i)}{r_i}=2 
    \end{equation*}
    or
     \begin{equation*}
        \lim\limits_{i \to \infty} \frac{d(y_i,p_i)}{r_i}=2 \text{ and } \lim\limits_{i \to \infty} \frac{d(y_i,q_i)}{r_i}=0.
    \end{equation*}
    Without loss of generality we assume the first case. We show $\overline{yx}$ extends $\overline{px}$. Suppose not, then by non-branching any geodesic $\overline{y_iq_i}$ can only intersect $\overline{p_iq_i}$ at $q_i$. In particular, $x$ is not in the interior of $\overline{y_iq_i}$. Repeating the same argument as in the previous lemma gives a contradiction. 
    
    The claim about $m$ follows from, for example, \cite{CaM2}.
  \end{proof}
  
  Combined with the discussion in the beginning of the section immediately gives the following corollary.
  \begin{Cor}
    Any collapsed $\RCD(K,2)$ space which is not a point is isometric to a line, a ray, an interval or a circle. 
    \label{clps}
  \end{Cor}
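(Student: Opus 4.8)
The plan is to assemble the corollary directly from Theorem \ref{1D classification} together with the dichotomy on essential dimension recorded at the start of the section, so that no new estimate is required. First I would recall that, by combining \cite{BGHZ} with \cite{DG}, a collapsed $\RCD(K,2)$ space---one whose reference measure $m$ is not proportional to $\mathcal{H}^2$---has essential dimension either $0$ or $1$. Since the statement excludes the case where $X$ is a point, I would then rule out essential dimension $0$: a space of essential dimension $0$ satisfies $m(X\setminus R_0)=0$, so $m$-almost every point has a single tangent cone equal to a point; because an $\RCD(K,N)$ space is a geodesic, and in particular connected, length space, this forces $X$ to consist of a single point, contradicting the hypothesis. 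Hence the essential dimension of $X$ is exactly $1$.

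With this reduction in hand, I would apply Theorem \ref{1D classification} in the case $N=2$. That theorem asserts precisely that any $\RCD(K,N)$ space of essential dimension $1$ is isometric to a line, a ray, an interval, or a circle, which is exactly the conclusion sought. The finer structural information that $m = h\mathcal{H}^1 \ll \mathcal{H}^1$ for an $\MCP(K,N)$ density $h$ is also furnished by that theorem, although it plays no role in the statement of the corollary and I would not invoke it here.

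The substantive work has already been carried out in Lemma \ref{R1 interior} and Theorem \ref{1D classification}, so the genuine step in proving the corollary is the passage from ``collapsed and not a point'' to ``essential dimension $1$.'' The only (minor) obstacle I anticipate is justifying that the essential-dimension-$0$ case corresponds exactly to a single point; I expect this to follow immediately from the connectedness of the space together with the definition of $R_0$, so the corollary reduces to a clean bookkeeping of the cited results rather than any delicate argument.
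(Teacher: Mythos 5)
Your proposal is correct and takes essentially the same route as the paper: the paper derives the corollary by combining Theorem \ref{1D classification} with the remark at the start of the section that a collapsed $\RCD(K,2)$ space has essential dimension $0$ or $1$, dismissing the dimension-$0$ case as trivial. Your only addition is to spell out why essential dimension $0$ forces $X$ to be a single point, a detail the paper leaves implicit.
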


  \section{Proof of Theorem \ref{thm:c01}}
Let's first we recall the corresponding unique continuation theorem in $\R^2$ for elliptic equations of divergence form with merely measurable coefficients.
 \begin{Thm}[\cite{AM, Al, Sc}]
 Given a solution u of equation 
\begin{align}
    Lu = -D_\alpha(a^{\alpha\beta}(z)D_\beta u+b^\alpha(z)u)+c^\alpha(z)D_\alpha u+e(z)u=0
\end{align}
 on a simply connected domain $\Omega$ in the plane where $A=(a^{\alpha\beta})$ is a $2\times 2$ symmetric matrix of $L^\infty(\Omega)$ coefficients satisfying for some constant $0<\lambda\leq 1$ the uniform elliptic condition 
 \begin{align}
     \lambda |\xi|^2\leq \sum_{\alpha,\beta=1}^2a^{\alpha\beta}(z)\xi_\alpha\xi_\beta\leq   \lambda^{-1} |\xi|^2, \text{ for all } z\in \Omega,\,\,\xi\in \R^2.
 \end{align}
If there exists a point $x\in \Omega$ such that
 \begin{align}
     u(z)=o(|d(x,z)|^n) \text{ for all }n\in\N .
 \end{align}
 Then we have that $u\equiv 0$.\label{thm:uc}
  \end{Thm}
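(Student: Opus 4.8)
The plan is to reduce Theorem \ref{thm:uc} to classical unique continuation for holomorphic (equivalently, harmonic) functions by passing through the two-dimensional theory of quasiconformal maps and pseudoanalytic functions. Write $z = x^1 + \st\, x^2$ and let $\partial_z, \partial_{\bar z}$ denote the Wirtinger derivatives. Since $\Omega$ is simply connected, the weak form of the equation says that the $L^2$ vector field $A\nabla u$ is divergence-free in the distributional sense (modulo lower order contributions); rotating a divergence-free field by a right angle produces a curl-free field, which integrates to a \emph{stream function} $v \in W^{1,2}(\Omega)$, determined up to a constant, which I normalize by $v(x) = 0$. Here $\nabla v = J\,A\nabla u$ with $J$ the rotation by a right angle. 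A direct computation then shows that $f = u + \st\, v$ satisfies a Beltrami-type system
\begin{equation*}
\partial_{\bar z} f = \mu\, \partial_z f + \nu\, \overline{\partial_z f} + \sigma f + \tau \bar f,
\end{equation*}
where $\mu,\nu \in L^\infty$ are built from $A$ and satisfy $\norm{\mu}{\infty} + \norm{\nu}{\infty} \le k < 1$ with $k = k(\lambda)$, while $\sigma, \tau \in L^\infty$ come from $b,c,e$.

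In the case of no lower order terms ($\sigma = \tau = 0$) the argument is clean: $f$ is quasiregular, so by the Stoilow factorization theorem $f = H \circ \chi$ with $\chi : \Omega \to \Omega'$ a quasiconformal homeomorphism (dilatation controlled by $k$) and $H$ holomorphic. Taking real parts gives $u = h \circ \chi$ with $h := \RE H$ harmonic on $\Omega'$. Both $\chi$ and $\chi^{-1}$ are locally Hölder, so the hypothesis $u(z) = o(|z-x|^n)$ for every $n$ transfers to $h(w) = o(|w - \chi(x)|^n)$ for every $n$. Since harmonic functions are real-analytic, one vanishing to infinite order at an interior point vanishes identically; hence $h \equiv 0$, so $u = h\circ\chi \equiv 0$.

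To incorporate the lower order terms I would invoke the Bers–Vekua similarity principle: solving an auxiliary $\partial_{\bar z}$-problem yields a bounded continuous $s$ with $f = e^{s} w$, where $w$ solves the homogeneous system $\partial_{\bar z} w = \mu\, \partial_z w + \nu\, \overline{\partial_z w}$ and hence factors as $w = H\circ\chi$. Because $e^{s}$ never vanishes, $f$ and $w$ have the same zeros with the same multiplicities, so it suffices to show that $f$ — and not merely $u = \RE f$ — vanishes to infinite order at $x$; equivalently, that the conjugate $v$ does. A first attempt combines the Caccioppoli inequality (which propagates the averaged decay of $u$ to $\nabla u$, hence to $\nabla v = J\,A\nabla u$) with the Poincaré inequality to control $\int_{B_r}|v - v_{B_r}|^2$. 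To upgrade this to genuine infinite-order decay of $v$ I would use that $v$ itself solves the dual divergence-form equation $\dv(\tilde A\nabla v) = 0$ with $\tilde A = (\det A)^{-1}A$, so that $(u,v)$ form a pseudoanalytic pair; a joint frequency-function estimate then forces $v$ to vanish to the same infinite order as $u$. Granting this, $w$ vanishes to infinite order at $x$, $H$ at $\chi(x)$, and holomorphic unique continuation gives $H\equiv 0$, whence $u\equiv 0$.

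The step I expect to be the main obstacle is precisely establishing that the conjugate $v$ vanishes to infinite order. The hypothesis only controls $u = \RE f$, yet the factorization sees the full complex function $f$, and the real and imaginary parts are coupled through the nonconstant, merely Hölder factor $e^s$ that the lower order terms introduce; in the pure principal-part case this difficulty evaporates since there $u = h\circ\chi$ outright. The subtle point is that Caccioppoli and Poincaré control $v$ only up to its means $v_{B_r}$, which are a priori merely Hölder-small, so closing the gap genuinely requires the coupled pseudoanalytic structure (the dual equation for $v$) and a frequency-type argument rather than the quasiconformal machinery alone.
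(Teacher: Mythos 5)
You have correctly identified the reduction that the paper itself only gestures at: Theorem \ref{thm:uc} is quoted from \cite{AM, Al, Sc} with the single remark that one reduces to a first-order elliptic system handled by Bers and Nirenberg \cite{BN}, and your skeleton (conjugate/stream function, Beltrami-type system for $f=u+\st\,v$, similarity principle $f=e^{s}w$, Stoilow factorization $w=H\circ\chi$) is exactly the skeleton of those references. One smaller caveat before the main point: when $b,c,e\neq 0$ the field $A\nabla u+bu$ is not divergence-free, so the stream function does not exist as you define it; one must first subtract a local Newtonian potential of $c\cdot\nabla u+eu$ (or write the first-order system for the pair directly, as in \cite{BN,Sc}) before rotating by $J$.

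The genuine gap is your closing step, and it is the one you yourself flag. You propose to show that the conjugate $v$, hence all of $f$, vanishes to infinite order at $x$, and to do so via a ``joint frequency-function estimate'' for the pseudoanalytic pair. That tool is not available here: frequency monotonicity in the plane requires (at least) Lipschitz principal coefficients, and the entire point of Theorem \ref{thm:uc} is that $A$ is merely $L^\infty$; moreover there is no reason for $v$ to inherit infinite-order vanishing from $u$ (Caccioppoli plus Poincar\'e only control $v-v_{B_r}$, as you note). The standard way to finish --- and the one underlying \cite{AM,Sc} --- needs no information about $v$ at all: argue contrapositively. If $u\not\equiv 0$ then $H\not\equiv 0$, so $H(\zeta)=c(\zeta-\chi(x))^{n}+o(|\zeta-\chi(x)|^{n})$ for some finite $n\ge 0$ and $c\neq 0$. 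As $z$ traverses a small circle $|z-x|=r$, the argument of $H(\chi(z))$ winds $n$ times around while $\imag s$ stays uniformly bounded, so there is a point on that circle where $f$ is (essentially) real and hence
\begin{equation*}
\sup_{|z-x|=r}|u|\;\ge\;e^{-\|s\|_{L^\infty}}\,\inf_{|z-x|=r}|H(\chi(z))|\;\ge\;c'\,r^{nK},
\end{equation*}
using the H\"older estimates for the quasiconformal map $\chi$ and its inverse. This contradicts $u(z)=o(|z-x|^{N})$ once $N>nK$, so the order of vanishing of $u$ at any point is finite unless $u\equiv 0$. Replacing your final step by this winding-number (argument-principle) estimate on the holomorphic factor closes the proof; as written, the proposal does not.
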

 
  The key of the proof is to reduce the problem to an elliptic system which is handled by Bers and Nirenberg \cite{BN}. For more details, see, for example, Schulz \cite{Sc}.
  
  We now prove Theorem \ref{thm:c01} using the results from the previous sections. 
   \begin{Thm}
  Given a harmonic function $\phi\in W^{1,2}(\Omega)$ on a connected open subset $\Omega$ of a $\RCD(K,2)$ space $X$, if we have that $\phi|_{B_r(x)}=0$ for some $x\in \Omega$ and $r>0$, then $\phi\equiv 0$.
  \label{rc01}
 \end{Thm}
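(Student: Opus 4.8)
The plan is to dichotomize according to whether $(X,d,m)$ is collapsed or non-collapsed. If the space is collapsed, then by Corollary \ref{clps} the connected component of $\Omega$ containing $x$ lies in a space isometric to a line, ray, interval, or circle, carrying a weighted measure $m = h\mathcal{H}^1$ for an $\MCP(K,N)$ density $h$. On such a one-dimensional space the harmonic equation reduces to $(h\phi')' = 0$ in the weak sense, so that $h\phi'$ is constant; since $\phi$ vanishes on $B_r(x)$ we have $\phi' = 0$ there, which forces this constant to be zero, and hence $\phi' \equiv 0$ on the connected domain where $h > 0$. Thus $\phi$ is a constant equal to $0$, and the collapsed case is settled.

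The substantive case is the non-collapsed one, where Proposition \ref{pro:raequ} identifies $(X,d)$ with a two-dimensional Alexandrov space of curvature at least $K$. Here I would invoke the classical fact that the singular set $S_X$ of a two-dimensional Alexandrov space is at most countable; in particular it has $\mathcal{H}^2$-measure zero, and removing it from the connected open set $\Omega$ leaves $\Omega \cap (X \setminus S_X)$ connected, since deleting countably many points from a connected topological surface preserves path-connectedness. On the regular set $X \setminus S_X$, Proposition \ref{pro:c1} furnishes local coordinate charts in which, as recorded in \eqref{requ2}, $\phi$ solves a divergence-form elliptic equation whose coefficients are merely measurable and $L^\infty$, yet uniformly elliptic on compact subsets.

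With these charts in hand I would run an open--closed argument on the connected set $\Omega \cap (X \setminus S_X)$. Let $W$ be the set of regular points of $\Omega$ near which $\phi$ vanishes identically; it is open by definition and nonempty, because $B_r(x)$ meets the full-measure regular set. To see that $W$ is closed in $\Omega \cap (X \setminus S_X)$, take a regular point $y$ in its relative closure, pass to a chart around $y$, and note that $\phi$ then vanishes on a nonempty open subset of this planar domain; since the chart equation has only $L^\infty$ coefficients, the two-dimensional unique continuation Theorem \ref{thm:uc} forces $\phi \equiv 0$ on the entire chart, so $y \in W$. By connectedness $W = \Omega \cap (X \setminus S_X)$, whence $\phi$ vanishes on a set of full measure in $\Omega$; as $\phi \in W^{1,2}$ and is continuous by interior regularity of harmonic functions, this gives $\phi \equiv 0$.

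The crux, and the reason dimension two is essential, is the closedness step: because curvature is bounded only from below, the charts of Proposition \ref{pro:c1} carry a metric tensor that is at best continuous and possibly discontinuous off a measure-zero set, so the Lipschitz-coefficient machinery of Proposition \ref{gensuc} is unavailable. Appealing instead to the Bers--Nirenberg-type Theorem \ref{thm:uc}, which tolerates $L^\infty$ coefficients in the plane, is precisely what rescues the argument. The only remaining bookkeeping concerns $S_X$, and it is mild: since the singular set is countable and the vanishing of $\phi$ is needed only almost everywhere to upgrade to $\phi \equiv 0$, one never requires a unique continuation statement at the singular points themselves.
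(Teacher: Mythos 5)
Your proof is correct in outline and, in the non-collapsed case, follows the same skeleton as the paper: reduce to an Alexandrov surface via Proposition \ref{pro:raequ}, use the Otsu--Shioya charts of Proposition \ref{pro:c1} to put the equation in divergence form with merely $L^\infty$ elliptic coefficients, and invoke the planar Bers--Nirenberg-type result (Theorem \ref{thm:uc}) to propagate vanishing between regular points. You diverge from the paper at exactly two places. First, in the collapsed case the paper does not argue directly on the one-dimensional space; it shows via the $\CD(K,2)$ condition that the density $h$ is locally Lipschitz and positive, extends the solution and coefficients to the product $I\times I$ with measure $h(x_1)\mathcal{L}^2$, and applies Theorem \ref{thm:uc} there. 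Your direct integration of $(h\phi')'=0$ is more elementary and suffices for the harmonic case stated in Theorem \ref{rc01}, though the paper's device has the advantage of handling the full equation \eqref{equm} with its drift and zeroth-order terms without any separate ODE analysis. Second, to push the vanishing set across the singular set, the paper does not delete $S_X$ and argue by connectedness of the regular part; it shows that any boundary point of the open set where $\phi$ vanishes locally would have to be singular, and then derives a contradiction by running a geodesic from a regular point of $B_r(x)$ to a regular point where $\phi\neq 0$, using Petrunin's result \cite{P1} that such a geodesic consists of regular points. Your alternative --- that $S_X$ is countable, so $\Omega\setminus S_X$ remains connected --- is essentially sound but needs one repair: a two-dimensional Alexandrov space may have nonempty boundary (e.g.\ a convex planar domain), and every boundary point is singular in the Otsu--Shioya sense, so $S_X$ need not be countable. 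What is countable is the set of \emph{interior} singular points (they are atoms of the curvature measure, with locally summable angle deficits); you should therefore first pass to the manifold interior, observe that this preserves connectedness of $\Omega$, and only then remove the countable interior singular set. With that amendment both routes are valid, and yours avoids the appeal to \cite{P1}.
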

  \begin{proof}
    We first consider the non-collapsed case. Using Proposition \ref{pro:raequ}, we may assume we are working with an Alexandrov surface. By Proposition \ref{pro:c1}, we know that there exists coordinate chart around any regular point so that the metric tensor is $C^{\frac{1}{2}}$ outside a measure zero set, that is, there exists some $S \subset U$ with $\mathcal{H}^2(S)=0$ so that
  \begin{align}
      |g_{ij}(x)-g_{ij}(y)|=O(|xy|^{\frac{1}{2}}),\,\,for \,\,x\in U\setminus S,\,\,y\in U.
  \end{align}
  
 Therefore, given any solution of \eqref{equm} on an Alexandrov surface, the function in the local chart around a regular point satisfies the assumptions of Theorem \ref{thm:uc}. 
 
 Given any function $\phi\in W^{1,2}(\Omega)$ which is a solution of \eqref{equm} with $\phi|_{B_r(x)}=0$ for some $x\in \Omega$ and $r>0$. We show that $\phi\equiv 0$. It suffices to show that $\mathcal{H}^2(\Omega\setminus\phi^{-1}(0))=0$.
 
Assume otherwise, then $\partial \phi^{-1}(0)$ is nonempty in $\Omega$. Given any point $x_0\in \overline{\phi^{-1}(0)}\setminus \phi^{-1}(0)\subset\partial \phi^{-1}(0)$, if $x_0$ is regular, then there exists a local charts around $x_0$. Given a sequence of points $x_i\in Int(\phi^{-1}(0))$ converging to $x_0$. Then for large enough i, $x_i$ is in the chart. So we can use Theorem \ref{thm:uc} to conclude that $x_0$ is actually in $Int(\phi^{-1}(0))$. This argument shows that any point in $ \overline{\phi^{-1}(0)}\setminus \phi^{-1}(0)\subset\partial \phi^{-1}(0)$ must be singular. 

Now consider any geodesic from a regular point in $B_r(x)\cap\phi^{-1}(0)$ to a regular point in $\Omega\setminus\phi^{-1}(0)$. Every point on the geodesic must be regular by \cite{P1}. However, the geodesic cross $ \overline{\phi^{-1}(0)}\setminus \phi^{-1}(0)\subset\partial \phi^{-1}(0)$ which is a contradiction in view of the previous paragraph. 

If $(X,d,m)$ is a collapsed $\RCD(K,2)$ space, that is $m$ is not proportional to $\mathcal{H}^2$, then from the Corollary \ref{clps} we know that the space is isometric to a point, a line, a ray, an interval or a circle. Locally around each point away from the end points of such space (other than the point, which is trivial) is isometric to $(a,b)$ around an interior point. Denote $I=(a,b)$ and $\tilde{m}|_{I}=h\mathcal{L}^1|_{I}$. Then since $\RCD(K,2)$ implies $\CD_{loc}(K,2)$, we know that $(I,|\cdot|,h\mathcal{L}^1|_{I})$ is a $\CD(K,2)$ space. This shows that (cf. \cite{CaM1,CaM2}) \begin{align}
    h((1-t)x_0+tx_1)\geq \sigma_{k,1}^{(1-t)}(|x_1-x_0|)h(x_0)+\sigma_{k,1}^{(t)}(|x_1-x_0|)h(x_1)
\end{align}where 
\begin{equation}
 \sigma_{k,1}^{(t)}(\theta)=
 \left\{\begin{array}{ccc}
 \infty,&if\,\,k\theta^2\geq \pi^2\\ \frac{\sin(t\theta\sqrt{k})}{\sin(\theta\sqrt{k})},&if\,\,0<k\theta^2<\pi^2\\
    t,&if\,\,k\theta^2<0\,\,and\,\,N=1,\,\,or\,\,k\theta^2=0\\
     \frac{\sinh(t\theta\sqrt{k})}{\sinh(\theta\sqrt{k})},&if\,\,k\theta^2\leq 0\,\,and\,\,N>1
    \end{array}
    \right.
\end{equation}
This shows that 
\begin{align}
    \frac{h((1-t)x_0+tx_1)-h(x_0)}{t|x_0-x_1|}\geq \frac{\sigma_{k,1}^{(1-t)}(|x_1-x_0|)-1}{t|x_0-x_1|}h(x_0)+\frac{\sigma_{k,1}^{(t)}(|x_1-x_0|)}{{t|x_0-x_1|}}h(x_1),\\ 
    \frac{h(x_1)-h((1-t)x_0+tx_1)}{(1-t)|x_0-x_1|}\leq -\frac{\sigma_{k,1}^{(1-t)}(|x_1-x_0|)}{(1-t)|x_0-x_1|}h(x_0)+\frac{1-\sigma_{k,1}^{(t)}(|x_1-x_0|)}{(1-t)|x_0-x_1|}h(x_1),
\end{align}
which means that h is $C^{0,1}$ and positive away from end points, see also \cite{KKK}. So given any function f on  $(I,|\cdot|,h\mathcal{L}^1|_{I})$ which is a solution of \eqref{equm}, we can extend it to a function $\tilde{f}$ on $ (I\times I,|\cdot|,h(x_1)\mathcal{L}^2|_{I\times I})$ which is also a solution of \eqref{equm} with $\tilde{f}(x_1,x_2)=f(x_1)$, $\tilde{a}(x_1,x_2)=a(x_1)$,  $\tilde{b}^1(x_1,x_2)=b(x_1)$, $\tilde{b}^2(x_1,x_2)=0$, $\tilde{c}^1(x_1,x_2)=c(x_1)$, $\tilde{c}^2(x_1,x_2)=0$, $\tilde{e}(x_1,x_2)=e(x_1)$. So $\tilde{f}$ still satisfies the condition of Theorem \ref{thm:uc}. This shows that the unique continuation also holds for collapsed $\RCD(K,2)$ spaces.

  \end{proof}

 \section{Counterexample For strong unique continuation}
 
 In this section we show that the strong unique continuation property for harmonic functions fails on some $\RCD(K,4)$ space for all $K \in \R$. In the first two steps of the proof, we will construct the spaces on which we have our counterexamples. The examples considered dates back to the discussion on metric horn given by Cheeger and Colding in \cite{CC1}. In the last two steps, we will check the failure of unique continuation on such spaces. The techniques used are inspired by \cite{Ka1,Ka2,CM1,WZ}. 
 \begin{Thm}
For all $K\in\R$, there exists an $\RCD(K,4)$ space X, an open subset $\Omega\subset X$ and a harmonic function $u:\Omega\to \R$ which does not have the strong unique continuation property, that is, u vanishes to infinite order at some $x_0\in \Omega$ but $u\not\equiv 0$. Moreover, if $K\leq 0$, $u$ may be chosen to be a global harmonic function.
 \end{Thm}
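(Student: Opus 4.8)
The plan is to build the counterexample on a weighted warped-product ``horn'' that collapses a sphere to a single point, choosing the weight so that a Bochner/Bakry--Émery bound survives in an enlarged dimension $N\ge 4$ even though the genuine Ricci curvature of the metric tends to $-\infty$ at the tip. Concretely, I would take $M=(0,\delta)\times S^{2}$ with metric $g=dr^2+f(r)^2 g_{S^2}$ and reference measure $dm=e^{-V(r)}\,d\mathrm{vol}_g$, where $f(r)=r^a$ with $a>1$ (a horn slightly sharper than a cone, in the spirit of the Cheeger--Colding metric horn) and $V(r)=\gamma\log r$ for a constant $\gamma<0$ to be fixed. As $r\to 0$ the $S^2$-fibers shrink to a point, so the metric completion $X$ adds a single tip $x_0$ at finite distance. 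For $K\le 0$ I would extend $f,V$ to all of $(0,\infty)$, matching to a cone or cylinder far from the tip, to produce a complete global space; for $K>0$ the Bakry--Émery bound forces compactness, so one caps the space off and works on a neighborhood $\Omega$ of $x_0$.

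The heart of the argument is verifying that $(X,d,m)$ is $\RCD(K,N)$. On the smooth part I would compute the $N$-Bakry--Émery tensor $\Ric+\Hess V-\tfrac{1}{N-3}\,dV\otimes dV$. Its radial entry is $\big(-2a(a-1)-\gamma-\tfrac{\gamma^2}{N-3}\big)r^{-2}$ and its spherical entry is $r^{-2a}\big(1+o(1)\big)$. The spherical entry blows up to $+\infty$ precisely because the fiber dimension is $\ge 2$ (this is why a circle fiber, giving $N=3$, cannot work), while the radial entry is $\ge K$ as soon as $\tfrac{\gamma^2}{N-3}+\gamma+2a(a-1)\le 0$, which is solvable for $\gamma<0$ exactly when $N-3\ge 8a(a-1)$. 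Taking $a$ close to $1$ this holds with $N=4$, which is the quantitative source of the threshold $N\ge 4$. I would then upgrade this pointwise bound to the synthetic condition on the completion, either by realizing $X$ as a measured Gromov--Hausdorff limit of smooth weighted manifolds with Bakry--Émery bound $\ge K$ and invoking stability of $\RCD(K,N)$, or by checking $\CD(K,N)$ directly through one-dimensional localization along transport geodesics.

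For the harmonic function I would separate variables: fixing a spherical harmonic $\phi_\lambda$ on $S^2$ with $\Delta_{S^2}\phi_\lambda=-\lambda\phi_\lambda$, $\lambda>0$, and setting $u=v(r)\phi_\lambda$, harmonicity $\Delta_m u=0$ becomes $v''+\tfrac{2a-\gamma}{r}\,v'-\tfrac{\lambda}{r^{2a}}\,v=0$. Since $2a>2$ the potential term dominates near $r=0$, and Liouville--Green asymptotics (or an explicit sub/supersolution barrier) produce a recessive solution with $v(r)\sim \exp\!\big(-\tfrac{\sqrt\lambda}{a-1}\,r^{1-a}\big)$, decaying faster than every power of $r$. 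Setting $u=v\phi_\lambda$ on $\Omega=B_\delta(x_0)$, I would check that $u\in W^{1,2}$ and is a genuine weak solution across the tip---since $x_0$ has zero capacity and $u,\nabla u$ decay super-polynomially, cutoffs near $x_0$ contribute nothing in the limit---and that $\int_{B_r(x_0)}|u|^2\,dm\le c_N r^N$ for every $N$, using the decay of $v$ together with $dm\sim r^{2a-\gamma}\,dr$ and $B_r(x_0)=\{0<r'<r\}$. As $u\not\equiv 0$, strong unique continuation fails.

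The main obstacle I expect is the verification in the second step: the pointwise Bochner inequality on the smooth part is a direct computation, but making the synthetic $\RCD(K,N)$ condition hold across the singular tip is delicate, because optimal-transport geodesics between bulk points can dip into the collapsing region near $x_0$, so the $\CD$ inequality must be controlled there (most cleanly via an approximation/stability argument). Coupled with the need to satisfy the parameter constraint $N-3\ge 8a(a-1)$ simultaneously with the prescribed $K$, this is exactly what pins down the dimension threshold $N\ge 4$.
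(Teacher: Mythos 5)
Your construction of the underlying space is essentially the paper's: a warped product over $S^2$ with warping function $r^a$, $a>1$ (the paper uses $\tfrac12 r^{1+\ep}$), weighted by a power of $r$ so that the $4$-Bakry--\'Emery tensor stays bounded below near the tip, then capped off (for $K>0$) or opened to a cone (for $K\le 0$); your parameter constraint $N-3\ge 8a(a-1)$ is the same quadratic the paper solves with $(\ep,\eta)$. Where you genuinely diverge is in producing the counterexample function and in verifying the vanishing order. You separate variables and extract the recessive solution of the radial ODE, getting an explicit harmonic $u=v(r)\phi_\lambda$ with stretched-exponential decay $\exp(-c\,r^{1-a})$; this is more elementary and, in the $K\le 0$ case, gives the global harmonic function for free since the separated solution extends along the conical end, whereas the paper uses Perron's method for existence and a three-circle theorem to pass to a global solution. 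The trade-off is that the paper's route proves more: via the Cheng--Yau gradient estimate and the fact that $\diam \partial B_r\sim r^{1+\ep}\ll r$, it shows \emph{every} harmonic function vanishing at the tip vanishes to infinite order (the Remark after the theorem), not just one explicit solution. Two points in your plan deserve attention. First, the $\RCD$ verification at the tip, which you rightly flag as the delicate step, is handled in the paper not by smoothing/stability but by an elementary lemma: for $x,y$ near the vertex $V$, any curve through $V$ has length at least $r(x)+r(y)$, while $d(x,y)\le |r(x)-r(y)|+\tfrac{\pi}{2}\min\{r(x),r(y)\}^{1+\ep}$, so no geodesic passes through $V$; hence optimal transport never charges the tip, $\CD_{loc}$ follows from the smooth Bakry--\'Emery bound on the punctured space, and Cavalletti--Milman globalizes. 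Your worry about geodesics ``dipping into the collapsing region'' is thus resolved outright rather than controlled. Second, your capping for $K>0$ is stated in one line but requires the same careful $C^\infty$ matching of warping functions and weights that occupies Step 1 of the paper; it is routine but not free. With those two steps filled in, your argument is a correct and in places cleaner alternative proof of the theorem as stated, though not of the stronger remark.
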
\begin{proof}

For each $n \geq 3$, define the metric $g_{\ep,n}$ as 
\begin{align}
    g_{\ep,n}=dr^2+(\frac{1}{2}r^{1+\ep})^2g_{S^{n-1}}.
    \end{align}
 
For a warped product of the form $g=dr^2+\phi(r)^2g^{S^{n-1}}$, we have that 
 \begin{align}
     \Ric_g=-(n-1)\frac{\phi''}{\phi}dr\otimes dr+((n-2)(1-(\phi')^2)-\phi''\phi)g_{S^{n-1}}.
 \end{align}
Furthermore, for a weighted Riemannian manifold $(M,dr^2+\phi(r)^2g^{S^{n-1}},e^{-\chi}dvol_g)$ and $N\in \N\setminus \{n\}$, the  $N$-Ricci curvature (cf. \cite{Li,BaE,Ba,Qi}) is defined as
\begin{align}
    \Ric_N:=-(n-1)\frac{\phi''}{\phi}dr\otimes dr+((n-2)(1-(\phi')^2)-\phi''\phi)g_{S^{n-1}}+Hess\,\chi-\frac{d\chi\otimes d\chi}{N-n}.
\end{align}
 
Then we have that for $(\R^n\setminus\{0\},\,\,g_{\ep,n},\,\,e^{(N-n)(1-\eta)log(r)}dvol),$\begin{align}
\begin{split}
    \Ric_N=&\frac{(N-n)\eta(1-\eta)-(n-1)\ep(1+\ep)}{r^2}dr\otimes dr\\ &+(n-2-\frac{(1+\ep)}{4}(n-2+(n-1)\ep+(N-n)(1-\eta))r^{2\ep})g_{S^{n-1}}.
    \end{split}
\end{align}
From now on we assume $N=n+1=4$, then
\begin{align}
    &Ric_4=\frac{\eta(1-\eta)-2\ep(1+\ep)}{r^2}dr\otimes dr+(1-\frac{(1+\ep)(2+2\ep-\eta)}{4} r^{2\ep})g_{S^{2}}.
\end{align}
By carefully choosing $\ep$ and $\eta$, we have that $Ric_4\geq Kg_{\ep,n}$ on $\{0<r\leq C(K)\}$ for some $C(K)>0$. Since the constructed space should be complete, we need to deal with the vertex and the set $\{r\geq C(K)\}$. In step 1, we will modify the metric in $\{r\geq C(K)\}$ to get a $\RCD(K,4)$ space which has the same metric on $\{0< r\leq C(K)\}$. We will denote the space as $X$.\\

 \textit{Step 1. Gluing for large $r$.}
 
First we show that if $K>0$, we can add a cap to close the end so that the resulting space satisfies $\Ric_4\geq Kg$. The technique is similar to that in the last section of \cite{WZ}. 
 
We shall construct the metric by gluing a smooth small sphere metric at the end. The rotational symmetry will be preserved in the process.  Thus we are actually gluing the two warping functions
\begin{align*}
    \phi_{g_{\ep,n}}=\frac{1}{2}r^{1+\ep} \; \;   \quad \textrm{and} \quad \phi_{a^{-2} g_{round}}=\frac{\sin\left(ar\right)}{a}, \; a > 1. 
\end{align*}
It is clear that $\Ric_4\geq Kg$ in the later case given $a\gg 1$.

Choose a small positive constant $\rho$ and define
\begin{align}
    a =\frac{\sqrt{4-(1+\ep)^2 \rho^{2\ep}}}{\rho^{1+\ep}}, \quad \xi= \frac{\arccos (\frac{1+\ep}{2}\rho^{\ep})}{a}. 
\end{align}

Then we have
\begin{align}
 \frac{1}{2}\rho^{1+\ep} =\frac{\sin\left(a \xi \right)}{a}, \quad \frac{1+\ep}{2}\rho^{\ep}=\cos\left(a \xi \right).\label{eqn:OI20_4z}
\end{align}
It is clear that $a \to +\infty$ and $\xi \to 0^{+}$, as $\rho \to 0^{+}$.

Take a smooth function 
\begin{align*}
\psi:\R\to \R,\,\,\psi(x)=\left\{\begin{array}{ccc}0, \,\,\,\,\quad\forall x\leq 0,\\1, \,\,\,\,\quad\forall x\geq 1.\end{array}\right.
\end{align*}
Denote
 \begin{align}
 l(t)= \left(1-\psi\left(\frac{t-\rho}{\zeta}\right)\right)\frac{\ep(1+\ep)}{2}t^{\ep-1}
-\psi\left(\frac{t-\rho}{\zeta}\right)a\,\sin\left(a\,(t-\rho-\zeta+\xi)\right). 
\label{eqn:OI20_5z}
\end{align} 
From its definition, for each integer $k \geq 0$, we have
\begin{align}
&l^{(k)}(\rho)=\frac{\Pi_{i=0}^{k+1}(1+\ep-i)}{2}\rho^{\ep-k-1} \label{eqn:OI26_1}\\
&l^{(2k)}(\rho+\zeta)=(-1)^{k+1} a^{2k+1} \sin (a \xi),  \quad l^{(2k+1)}(\rho+\zeta)=(-1)^{k+1} a^{2k+2} \cos (a \xi).   \label{eqn:OI26_2}
\end{align}
Then for any small constant $\zeta,\kappa\leq \frac{1}{100}$ we can construct the function $\phi=\phi_{\xi,\zeta,\kappa}$ as follows.  
\begin{itemize}
\item[(1).]  If $r \in [0,\rho]$, then 
       \begin{align}
        \phi = \frac{1}{2}r^{1+\ep}. 
        \label{eqn:OI27_3}   
      \end{align}
\item[(2).]  If $r \in [\rho,\rho+\zeta]$, then 
     \begin{align}
        \phi = \frac{1}{2}\rho^{1+\ep}+(r-\rho) \frac{1+\ep}{2}\rho^{\ep}+\int_{s=\rho}^{r}\int_{t=\rho}^{s}l(t). 
     \label{eqn:OI27_1}   
     \end{align}
\item[(3).]  If $r \in [\rho+\zeta,\rho+\zeta+\kappa]$, then 
        \begin{align}
          \phi&=\frac{\sin\left(a \xi \right)}{a}+(r-\rho-\zeta)\cos\left(a \xi \right)+\left(1-\psi\left(\frac{r-(\rho+\zeta)}{\kappa}\right)\right)\int_{s=\rho}^{r}\int_{t=\rho}^{s}l(t) \notag\\
                &\quad+\psi\left(\frac{r-(\rho+\zeta)}{\kappa}\right)\int_{s=\rho+\zeta}^{r}\int_{t=\rho+\zeta}^{s}l(t)+\left(1-\psi\left(\frac{r-(\rho+\zeta)}{\kappa}\right)\right)\zeta\,\cos\left(a \xi \right). 
                \label{eqn:OI27_2}
        \end{align}
\item[(4).]  If $r \in [\rho+\zeta+\kappa,\rho+\zeta-\xi+\frac{\pi}{a}]$,  then 
       \begin{align}
        \phi=\frac{\sin\left(a\,(r-\rho-\zeta+\xi)\right)}{a}. 
        \label{eqn:OI27_4}   
      \end{align}
\end{itemize}
In light of \eqref{eqn:OI20_4z}, \eqref{eqn:OI26_1} and \eqref{eqn:OI26_2}, direct calculation implies that $\phi$ is smooth at $r=\rho, \rho+\zeta$ and $\rho+\zeta+\kappa$. 
Consequently, $\phi$ is smooth on $[0, \rho+\zeta-\xi+\frac{\pi}{a}]$.

Then we also glue the measure on the spherical part with $m=C dvol_g$. As \begin{align}
    Hess\,\chi-\frac{d\chi\otimes d\chi}{N-n}=(\chi_{rr}-\chi_r^2)dr\otimes dr+\phi\phi'\chi_rg_{S^2}.
    \end{align}

And we start from $\chi_0(r)=-(1-\eta)log(r)$. By a similar trick as above we can set $\chi_{rr}$ to oscillate on $[\rho+\zeta+\kappa,\rho+\zeta+3\kappa]$ such that $\chi_{rr}(\rho+\zeta+\kappa)=(1-\eta)\frac{1}{(\rho+\zeta+\kappa)^2}$,\,$\chi^{(n)}(\rho+\zeta+3\kappa(=0$ for $n\geq 1$ and $\chi_{rr}(s)\geq 0,\,\chi_r(s)\leq 0$ on $s\in[\rho+\zeta+\kappa,\rho+\zeta+3\kappa]$. 

Now we cliam that we can choose $\chi_{rr}$ properly such that $\chi_{rr}-\chi_r^2\geq 0$ on $[\rho+\zeta+\kappa,\rho+\zeta+3\kappa]$. 

Define\begin{align}
    \chi_{rr}(r)=\psi_{\rho+\zeta+\kappa,\rho+\zeta+\kappa+\ep}(r)\chi_{0,rr}(r)+(1-\psi_{\rho+\zeta+\kappa,\rho+\zeta+\kappa+\ep})\psi_{\rho+\zeta+3\kappa-\ep,\rho+\zeta+3\kappa}(r)K, 
\end{align}
where \begin{align}
    \varphi_{a,l}(x)=\left\{\begin{array}{ccc}0&,\,\,x\leq a\\
    e^{-(x-a)^{-2}}&,\,\,x>a.
    \end{array}\right.
    \\   \varphi_{a,r}(x)=\left\{\begin{array}{ccc}e^{-(x-a)^{-2}}&,\,\,x\leq a\\
    0&,\,\,x>a.
    \end{array}\right.
    \\\psi_{a,b}(x)=\frac{\varphi_{b,r}(x)}{\varphi_{a,l}(x)+\varphi_{b,r}(x)}.
\end{align} Here we choose $\ep\leq\frac{\kappa}{100} $ and $K$ such that  $\chi_r(\rho+\zeta+3\kappa)=0$.

As we know that $(\chi_{rr}-\chi_r^2)(\rho+\zeta+\kappa)>0$, we can get that $\chi_{rr}-\chi_r^2\geq 0$ on $[\rho+\zeta+\kappa,\rho+\zeta+\kappa+\ep]$ once we choose $\ep$ small enough. Also on $[\rho+\zeta+\kappa+\ep,\rho+\zeta+3\kappa-\ep]$ we also have $\chi_{rr}-\chi_r^2\geq 0$ as $\chi_{rr}\equiv K$ on $[\rho+\zeta+\kappa+\ep,\rho+\zeta+3\kappa-\ep]$ and $(\chi'_{r})^2$ is decreasing. 

On $[\rho+\zeta+3\kappa-\ep,\rho+\zeta+3\kappa]$ as  $-\chi_r(s)=\int_{s}^{\rho+\zeta+3\kappa}\chi_{rr}(t)dt$ and $K\leq \frac{3}{(\rho+\xi+\kappa)\kappa}$ we have that $\chi_{rr}(s)-\chi_r^2(s)\geq \chi_{rr}(s)(1-\ep^2\chi_{rr}(s))\geq \chi_{rr}(s)(1-\ep^2 K)\geq 0$ as $\ep\leq\frac{\kappa}{100}$.
Thus we conclude the such $\chi$ satisfies that $\chi_{rr}-\chi_r^2\geq 0$. And by choosing $\kappa$ small enough and $\rho<<1$ we can get that $\phi\phi'\chi_r\leq \frac{(\frac{\sin\left(a\,(r-\rho-\zeta+\xi)\right)}{a})^2}{4}$.

\textbf{Claim:\;}
\textit{For each fixed small  $\rho\in (0, 1)$and sufficiently small  $\kappa$ and $\zeta \ll \kappa^2$, the smooth function $\phi=\phi_{\xi, \zeta, \kappa}$ constructed above satisfies }
\begin{align}
    \Ric_4&=-2\frac{\phi''}{\phi}dr\otimes dr+((1-(\phi')^2)-\phi''\phi)g_{S^{2}}+Hess\,\chi-{d\chi\otimes d\chi}\notag\\&\geq Kg=K(dr^2+\phi(r)^2g_{S^{2}}) \quad on\,\,r\leq \rho+\zeta-\xi+\frac{\pi}{a}.
    \label{eqn:OI26_5}
\end{align}

By discussion at the beginning of this section, it is clear that \eqref{eqn:OI26_5} is satisfied in cases (1) and (4).  
Thus we only focus on the proof of \eqref{eqn:OI26_5} in cases (2) and (3). 

We first show \eqref{eqn:OI26_5}. 
On the interval $ [\rho,\rho+\zeta]$, it follows from \eqref{eqn:OI20_5z} and \eqref{eqn:OI27_1} that
\begin{align*}
  -\phi''&=-l(r)=-\left(1-\psi\right)\frac{\ep(1+\ep)}{2}\rho^{\ep-1}
+\psi a\,\sin\left(a\,\xi\right)+ O(\zeta), \\
\phi'&=\frac{1+\ep}{2}\rho^{\ep} + O(\zeta), \\
  \phi&=\frac{1}{2}\rho^{1+\ep} + O(\zeta). 
\end{align*}
Applying \eqref{eqn:OI20_4z} on the above equations, we have, given $\rho,\ep$ sufficiently small,
\begin{align*}
  -\frac{\phi''}{\phi}\geq -(1+O(\zeta))\frac{\ep(1+\ep)}{\rho^2},\\
  (1-(\phi')^2)-\phi''\phi\geq \frac{1}{4}.
\end{align*}
Thus the $\psi$ term would dominate as before and we obtain \eqref{eqn:OI26_5} in case (2).
On the interval $[\rho+\zeta,\rho+\zeta+\kappa]$,  the second derivative $\phi''$ can be expressed as
\begin{align*} 
&\quad l(r)+\bigg[\psi\left(\frac{r-(\rho+\zeta)}{\kappa}\right)\left(\int_{s=\rho+\zeta}^{r}\int_{t=\rho+\zeta}^{s}l(t)-\int_{s=\rho}^{r}\int_{t=\rho}^{s}l(t)\right)\\
&\quad \quad +\left(1-\psi\left(\frac{r-(\rho+\zeta)}{\kappa}\right)\right)\zeta\,\cos\left(a \xi \right) \bigg]''\\
&=l(r)+\frac{1}{\kappa^2}\psi''\left(\frac{r-(\rho+\zeta)}{\kappa}\right)\left(\int_{s=\rho+\zeta}^{r}\int_{t=\rho+\zeta}^{s}l(t)-\int_{s=\rho}^{r}\int_{t=\rho}^{s}l(t)-\zeta\,\cos\left(a \xi \right) \right)\notag\\
&\quad \quad +2\frac{\psi'\left(\frac{r-(\rho+\zeta)}{\kappa}\right)}{\kappa}\left(\int_{t=\rho+\zeta}^{r}l(t)-\int_{t=\rho}^{r}l(t)\right)\\
&=l(r) + O\left(\frac{\zeta}{\kappa^2} \right) + O\left(\frac{\zeta}{\kappa} \right). 
\end{align*}
Since $\zeta \ll \kappa^2$ according to our choice, it follows from \eqref{eqn:OI20_5z},  \eqref{eqn:OI27_2} and the above inequality that 
\begin{align*}
-\phi''&=a \sin (a\xi) + O(\kappa)+O\left(\frac{\zeta}{\kappa} \right), \\
\phi'&=\cos (a\xi) + O(\kappa),\\
\phi&=\frac{\sin (a\xi)}{a} + O(\kappa). 
\end{align*}
Recalling that both $\kappa$ and $\frac{\zeta}{\kappa}$ are very small,  we immediately obtain \eqref{eqn:OI26_5} holds on $[\rho+\zeta, \rho+\zeta+\kappa]$, i.e., in case (3).  The proof of the claim is complete. \\

For $K\leq 0$, we will construct an open space with $\Ric_4\geq 0$ and a unique tangent cone at infinity. This is done by gluing the warping functions
\begin{align*}
    \phi_{g_{\ep,n}}=\frac{1}{2}r^{1+\ep} \; \;   \quad \textrm{and} \quad \phi_{a}=ar. 
\end{align*}

Choose a small positive constant $\rho$ and define
\begin{align}
    a =\frac{1+\ep}{2}\rho^{\ep}, \quad \xi= \frac{\rho}{1+\ep}. 
\end{align}

Then we have
\begin{align}
 \frac{1}{2}\rho^{1+\ep} =a\xi, \quad \frac{1+\ep}{2}\rho^{\ep}=a.\label{eqn:OI20_4za}
\end{align}
It is clear that $a \to 0^{+}$ and $\xi \to 0^{+}$, as $\rho \to 0^{+}$.

Take a smooth function 
\begin{align*}
\psi:\R\to \R,\,\,\psi(x)=\left\{\begin{array}{ccc}0, \,\,\,\,\quad\forall x\leq 0 \\1, \,\,\,\,\quad\forall x\geq 1.\end{array}\right.
\end{align*}
Denote
 \begin{align}
 l(t)= \left(1-\psi\left(\frac{t-\rho}{\zeta}\right)\right)\frac{\ep(1+\ep)}{2}t^{\ep-1}. 
\label{eqn:OI20_5z1}
\end{align} 
Similar as before, we define \begin{itemize}
\item[(1)]  If $r \in [0,\rho]$, then 
       \begin{align}
        \phi = \frac{1}{2}r^{1+\ep}. 
        \label{eqn:OI27_31}   
      \end{align}
\item[(2)]  If $r \in [\rho,\rho+\zeta]$, then 
     \begin{align}
        \phi = \frac{1}{2}\rho^{1+\ep}+(r-\rho) \frac{1+\ep}{2}\rho^{\ep}+\int_{s=\rho}^{r}\int_{t=\rho}^{s}l(t). 
     \label{eqn:OI27_11}   
     \end{align}
\item[(3)]  If $r \in [\rho+\zeta,\rho+\zeta+\kappa]$, then 
        \begin{align}
          \phi&=a\xi+a(r-\rho-\zeta)+\left(1-\psi\left(\frac{r-(\rho+\zeta)}{\kappa}\right)\right)\int_{s=\rho}^{r}\int_{t=\rho}^{s}l(t)+\left(1-\psi\left(\frac{r-(\rho+\zeta)}{\kappa}\right)\right)a\zeta. 
                \label{eqn:OI27_21}
        \end{align}
\item[(4)]  If $r \in [\rho+\zeta+\kappa,+\infty)$,  then 
       \begin{align}
        \phi=a\,(r-\rho-\zeta+\xi). 
        \label{eqn:OI27_41}   
      \end{align}
\end{itemize}
And similar as before we also glue the measure with $Cdvol_g$. Similar calculation shows that $\Ric_4\geq 0$ in this case.\\

 \textit{Step 2. Verifying the $\RCD$ condition.}
 
 Next we show that the space is an $\RCD(K,N)$ space. From \cite{CaMi} we know that it suffices to show that the space is $\CD_{loc}(K,N)$. Recall that \begin{Def}
Given $K \in \R$, $N \in (1,\infty]$ and $\mathcal{N} \in (0,\infty]$, define:
\[
D_{K,\mathcal{N}} := \begin{cases}  \frac{\pi}{\sqrt{K/\mathcal{N}}}  & K > 0 \;,\; \mathcal{N} < \infty \\ +\infty & \text{otherwise.} \end{cases} .
\]
In addition, given $t \in [0,1]$ and $0 < \theta < D_{K,\mathcal{N}}$, define:
\[
\sigma^{(t)}_{K,\mathcal{N}}(\theta) := \frac{\sin(t \theta \sqrt{\frac{K}{\mathcal{N}}})}{\sin(\theta \sqrt{\frac{K}{\mathcal{N}}})} = 
\begin{cases}   
\frac{\sin(t \theta \sqrt{\frac{K}{\mathcal{N}}})}{\sin(\theta \sqrt{\frac{K}{\mathcal{N}}})}  & K > 0 \;,\; \mathcal{N} < \infty, \\
t & K = 0 \text{ or }\mathcal{N} = \infty \\
 \frac{\sinh(t \theta \sqrt{\frac{-K}{\mathcal{N}}})}{\sinh(\theta \sqrt{\frac{-K}{\mathcal{N}}})} & K < 0 \;,\;\mathcal{N} < \infty 
\end{cases} ,
\]
and set $\sigma^{(t)}_{K,\mathcal{N}}(0) = t$ and $\sigma^{(t)}_{K,\mathcal{N}}(\theta) = +\infty$ for $\theta \geq D_{K,\mathcal{N}}$. \\
And define
\[
\tau_{K,N}^{(t)}(\theta) := t^{\frac{1}{N}} \sigma_{K,N-1}^{(t)}(\theta)^{1 - \frac{1}{N}} .
\]
When $N=1$, set $\tau^{(t)}_{K,1}(\theta) = t$ if $K \leq 0$ and $\tau^{(t)}_{K,1}(\theta) = +\infty$ if $K > 0$. 
\end{Def}

 \begin{Def}(\cite{Stu1,Stu2,LV})
A metric measure space $(X,d,m)$ is said to satisfy $\CD_{loc}(K,N)$ if for any $o \in \supp(m)$, there exists a neighborhood $X_o \subset X$ of $o$, so that for all $\mu_0,\mu_1 \in P_2(X,d,m)$ supported in $X_o$, there exists $\nu \in Opt(\mu_0,\mu_1)$ so that for all $t\in[0,1]$, $\mu_t := (e_t)_{\#} \nu \ll m$, and for all $N' \geq N$,
\begin{equation} \label{CDLOC}
\mathcal{E}_{N'}(\mu_t) \geq \int_{X \times X} (\tau^{(1-t)}_{K,N'}(d(x_0,x_1)) \rho_0^{-1/N'}(x_0) + \tau^{(t)}_{K,N'}(d(x_0,x_1)) \rho_1^{-1/N'}(x_1)) \pi(dx_0,dx_1) ,
\end{equation}
where $\pi = (e_0,e_1)_{\sharp}(\nu)$ and $\mu_i = \rho_i m$ for $i=0,1$.

\[
\mathcal{E}_N(\mu) := \int \rho^{1 - \frac{1}{N}} dm.
\]

\end{Def}
 Since the space has a weighted manifold structure with the corresponding $4$-Ricci curvature lower bound outside the tip, it is clear that for any $x \in X\setminus \{V\}$, the $\CD_{loc}$ condition is satisfied. 
 
 Thus it suffices to check the $\CD_{loc}$ condition around the vertex $V$.
 
 \begin{Lem}
On $X=(\R^3,g_{\ep,3})$ with \begin{align}
    g_{\ep,3}=dr^2+(\frac{1}{2}r^{1+\ep})^2g_{S^{2}},
    \end{align}  the vertex $V$ cannot be in the interior of any geodesic between any $x,y\in B_\eta(V)$, where $\eta>0$.\label{geo}
 \end{Lem}
 
 \begin{proof}
     It suffices to show that for small enough $\eta>0$, any $x,y\in B_\eta(V)$, any curve $\gamma:[0,1]\to X$ with $\gamma(0)=x,\,\gamma(1)=y$ and $\gamma(s)=V$ for some $s\in [0,1]$, the length $|\gamma|>d(x,y)$. 
     
     Since we know that $|\gamma|\geq d(V,x)+d(V,y)=r(x)+r(y)$ and as \begin{align}
     d(x,y)=d((r(x),\theta(x)),(r(y),\theta(y)))\leq |r(x)-r(y)|+\frac{\pi}{2}\min\{r(x),r(y)\}^{1+\ep}\leq r.     \end{align}
     Given $\eta$ small enough, for $x,y\in B_\eta(V)$ we have that 
     \begin{equation*}
     d(x,y) \leq |r(x)-r(y)|+\frac{\pi}{2}\min(\{r(x),r(y)\})^{1+\ep}< r(x)+r(y)\leq |\gamma|.
     \end{equation*} This shows that $\gamma$ cannot be a geodesic between $x$ and $y$.
     
 \end{proof}
 
Now we show the $\CD_{loc}$ condition around the vertex $V$. Given any $\mu_0,\mu_1 \in P_2(X,d,m)$ supported in $B_\eta(V)$, where $\eta$ is given in \eqref{geo}. From \cite{Lis} we know that there exists a $\pi\in P(Geo(X))$ such that $(e_t)_{\#}(\pi)=\mu_t$. As we know that from Lemma \ref{geo} all geodesics do not have vertex $V$ as an interior point and also essentially do not have $V$ as the boundary point as both $\mu_1,\mu_2$ are absolutely continuous with respect to $m=r^{(1-\eta)}dvol_{g_0}$. Thus as $X\setminus \{V\}$ has $4$-Ricci curvature lower bound, \eqref{CDLOC} is satisfied in this case.

By \cite[Proposition 6.7]{KK}, it is known that a $\CD(K,N)$ space $(X,d,m)$ is infinitesimally Hilbertian iff $m$-a.e. point $x \in X$ has a Euclidean space as one of its tangent cones. This clearly applies for the constructed example since it is a weighted Riemannian manifold away from $V$ and so the $\RCD(K,4)$ condition is verified.\\

 \textit{Step 3. Existence of non-trivial harmonic functions.} 
 
 Next we show that there exists at least one non-trivial harmonic function locally for the spaces constructed above. In the case of $K\leq 0$, we show the existence of a global harmonic function.
  
  First we show that we can construct harmonic function in $B_s(0)$ with any given continuous boundary value using Perron's method. For more discussions on Perron's method on metric spaces see for example \cite{BB}. 
 
 Given any continuous function $g:\partial B_s(0)\to \R$, consider \begin{align}
     S_g=\{h:B_s(0)\to \R\,|\, \Delta_X h\geq 0,\,\,h|_{\partial B_s(0)}\leq g\}.
 \end{align}
 Define
 \begin{align}
     f(x)=\sup_{v\in S_g}v(x).
 \end{align}
 From Perron's method and as all points in $\partial B_s(0)$ are regular, we have that f is continuous in $B_s(0)$, harmonic in  $0<r<s$ and $f|_{\partial B_s(0)}=g$. We show that it is harmonic in $B_s(0)$. Given any compactly supported function $u:B_s(0)\to \R$, we have, from Cheng-Yau estimate, 
 \begin{align}
     \sup_{\partial B_r(0)}|\na f|\leq \frac{C\sup_{B_{2r}(0)}(|f-f(0)|)}{r}\leq \frac{Ce^{-\frac{\ep}{4} (log\,r)^2}}{r},
 \end{align}
 where the last inequality comes from \eqref{qest}. So we have that 
 \begin{align}
    \int_{B_s(0)}\la \na u,\,\na f\ra\,d\nu= \lim_{t\to 0} \int_{B_s(0)\setminus B_t(0)}\la \na u,\,\na f\ra\,d\nu=\lim_{t\to 0} \int_{\partial B_t(0)}u\la\na f,\,\partial_r\ra d\nu=0.
 \end{align}
 This shows that $f$ is harmonic in $B_s(0)$ with $f|_{\partial B_s(0)}=g$. In particular, if we choose $g\not\equiv 0$, we have that $f\not\equiv 0$. 
 
Next we show that the existence of a global harmonic function in the example for $K\leq 0$. Note that the space we are considering is a cone at infinity. We can solve the Dirichlet problem on $\partial B_i$ with almost the same boundary data to get a harmonic function $u_i:B_i\to \R$. Now it suffices to show that $u_i$ converges to a non-trivial global harmonic function $u_{\infty}:X\to \R$. The key is a three circle theorem to control the behavior of $u_i$. The observation dates back to \cite{Z,Di}, cf. \cite{X}.

 Denote $M_u(r)=\frac{1}{\mu(B_r)}\int_{B_r}u^2d\mu$.  For the space constructed above, from a similar argument as Theorem 3.2 in \cite{X}, we have that given $s$ such that \begin{align}s(s+(1-\eta))\notin\{k(k+1),k\in \N\} \end{align} there exists integer $k_0>1$ such that for any $r\geq k_0$ and $u(x)$ harmonic over $B_r$, if \begin{align}
     M_u(r)\leq 2^{2s}M_u(\frac{r}{2})\label{ttt1}
 \end{align} then \begin{align}
     M_u(\frac{r}{2})\leq 2^{2s}M_u(\frac{r}{4}).\label{ttt2}
 \end{align}
 
 Then using Lemma 10.7 in \cite{C} (see the argument of Theorem 2.1 of \cite{Di}) we have that given any harmonic function $u_\infty$ on the tangent cone at infinity of X, we can find a sequence of harmonic functions $u_i$ on $B_{R_i}$ with $R_i\to\infty$ such that \begin{align}
     \mathop{lim}\limits_{i\to\infty} |u_i\circ \Psi_{\infty,i}-u_\infty|_{L^\infty(B_\infty(1))}=0,
 \end{align}
 where $\Psi_{\infty,i}:B_\infty(1)\to B_{R_i}$ is an $\ep_i$-approximation with $\mathop{lim}\limits_{i\to\infty}\ep_i=0$.

Finally we take $\alpha$ such that $\alpha(\alpha+(1-\eta))=2$ and $\phi_1$ is a eigenfunction on $S^2$ with respect to the eigenvalue $2$. Set $u_\infty=r^\alpha\phi_1(x)$ and construct $u_i$ as above. From a blow up argument (cf. Lemma 4.4 in \cite{X}) we know that for any $d>\alpha$ with $\alpha(\alpha+(1-\eta))\notin\{k(k+1),k\in \N\}$ and $r_0\in (0,1)$ there exists $i_0=i_0(d-\alpha_1,r_0)>0$ such that if $i\geq i_0$, for any $r\in [r_0R_i,R_i]$, \begin{align}
    M_{u_i}(r)\leq 2^{2d}M_{u_i}(\frac{r}{2}).
\end{align}
 
 Combining with \eqref{ttt1}, \eqref{ttt2} and Cheng-Yau gradient estimate we have that there exists $k_0(d)$ such that \begin{align}
     \tilde{u}_i=\frac{u_i}{\sqrt{M_{u_i}(\frac{k_0}{2})}}
 \end{align} 
converge to a global non-trivial harmonic function on X. This is similar to the proof in the smooth case, cf. \cite{Di,X}.\\

\textit{Step 4. Failure of strong unique continuation.} 

Now given any harmonic function $f$ defined on an open subset of $X$ containing the vertex, without loss of generality we assume that $f(0)=0$. From Cheng-Yau gradient estimate (\cite{CY}, cf. \cite{R}) we have that on $B_r(0)$,
\begin{align}
    \sup_{B_r(0)}|\na u|\leq \frac{C}{r}\sup_{B_{2r(0)}}|u|.
\end{align}
 
 From the maximum principle and the fact that $\partial B_r(0)$ is connected, we have that on $\partial B_r(0)$ there exists $x_r\in \partial B_r(0)$ such that $u(x_r)=0$. As $diam(\partial B_r(0))=\frac{\pi}{2}r^{1+\ep}$, we have that 
 \begin{align}
     \sup_{\partial B_r(0)}|u|\leq \frac{\pi}{2}r^{1+\ep} \sup_{\partial B_r(0)}|\na u|\leq Cr^{\ep} \sup_{\partial B_{2r}(0)}|u|.
 \end{align}
 Denote
 \begin{align}
     q(r)=\sup_{\partial B_{r}(0)}|u|
 \end{align}
 and 
  \begin{align}
     D=\sup_{\frac{r_0}{2}\leq r\leq r_0}q(r).
 \end{align}
 Thus we have that 
 \begin{align}
     q(r)\leq Cr^{\ep}q(2r).
     \label{decay}
 \end{align}
 Using \eqref{decay}, we have that on $\frac{r_0}{2^{k+1}}\leq r\leq \frac{r_0}{2^k}$, $q(r)\leq 2^{\frac{\ep k(k+1)}{2}}(Cr^{\ep})^kD\leq (Cr^{\frac{\ep}{3}})^{\log_2 r_0-\log_2r}D$. Thus we have 
 \begin{align}
     q(r)\leq Cr^ke^{-\frac{\ep}{3} (\log r)^2}.\label{qest}
 \end{align}
Since \begin{align}
    \lim_{r\to 0}\frac{e^{-\frac{\ep}{3} (\log r)^2}}{r^m}=\lim_{r\to 0}e^{-\frac{\ep}{3} (\log r)^2-m\,\log r}=0,
\end{align}for any $m\geq 0$, we have that  
 \begin{align}
    \fint_{B_r(0)}f\leq Cq(r)\leq  Ce^{-\frac{\ep}{3} (\log r)^2+k\log r} =o(r^m),\,\,\text{ for all } m\geq 0.
 \end{align}
 This offers a counterexample for strong unique continuation property if we can find a harmonic function $f\not\equiv 0$, whose existence is guaranteed by the step 3.
 
In all we show that for the spaces constructed above, the strong unique continuation property of harmonic functions fails.

 \end{proof}

\begin{Rem}
 Although there is a Hopf fibration $S^1\to S^3\stackrel{\pi}{\longrightarrow} S^2$, which can be regarded as a Riemannian submersion with totally geodesic fibres and they carry the metrics $g^{S^1},g^{S^3},\frac{1}{4}g^{S^2}$, it is unclear to us whether one can construct a Ricci limit space using warped products as in \cite{CC1}. More precisely, denote $k_2=\pi^*(\frac{1}{4}g^{S^2})$ and $g^{S^3}=k_1+k_2$ and we consider metric on $\R^4\setminus\{0\}$ as
 \begin{align}
     dr^2+(\chi r^{1-\eta})^2k_1+(\frac{1}{2}r^{1+\ep})^2k_2.
 \end{align}
We have, for $X$ tangent to $S^1$ and $Y$ tangent to $S^2$,  \begin{align}
    Ric(\partial_r)&=\frac{\eta(1-\eta)+2\ep(1+\ep)}{r^2}\partial_r,\\
    Ric(X)&=\frac{\eta(1-\eta)-2(1-\eta)(1+\ep)}{r^2}X,\\
    Ric(Y)&=(\frac{2\ep(1+\ep)}{r^2}+\frac{4-(1+\ep)^2r^{2\ep}}{r^{2+2\ep}}-\frac{(1-\eta)(1+\ep)}{r^2})Y.
\end{align} 
Such spaces do not have Ricci curvature lower bound in the $S^1$ direction as $\eta<2(1+\ep)$.
\end{Rem}

\end{document}